\g@addto@macro{\UrlBreaks}{\UrlOrds}
\theoremstyle{definition}
\newtheorem{defi}{Definition}[section]
\newtheorem{thm}[defi]{Theorem}
\newtheorem{lem}[defi]{Lemma}
\newtheorem{cor}[defi]{Corollary}
\newtheorem{quest}{Question}
\newtheorem{claim}[defi]{Claim}
\newtheorem{fact}[defi]{Fact}
\title{Borel chromatic numbers of locally countable $F_\sigma$ graphs and forcing with superperfect trees}
\author{Raiean Banerjee and Michel Gaspar}
\date{}
\begin{document}

\maketitle

\begin{abstract}
    In this work we study the uncountable Borel chromatic numbers, defined in \cite{geschke2011} as cardinal characteristics of the continuum, of low complexity graphs. We show that locally countable graphs with compact totally disconnected set of vertices have Borel chromatic number bounded by the continuum of the ground model. From this, we answer a question from Geschke and the second author (\cite{gasgesch2022}), and another question from Fisher, Friedman and Khomskii (\cite{Fischer2014CichosDR}) concerning regularity properties of subsets of the real line.
\end{abstract}

\section*{Acknowledgements}

We would like to thank our supervirors Stefan Geschke and Benedikt Löwe for their invaluable comments on this work. 

\section{Introduction}

In this article, we shall study set theoretic independence results for combinatorial statements about locally countable $F_\sigma$ graphs and their consequences for the set theory of the real numbers. The systematic study of definable graphs started in \cite{kechris1999borel} as a descriptive set-theoretic approach to concepts and results from graph theory, and this field is nowadays called \emph{descriptive graph combinatorics}.

If $G$ is a graph on a set $X$, then an $\alpha$-coloring of $G$ is a function $c: X \rightarrow \alpha$ such that $c(x) \neq c(y)$, for all $(x, y) \in G$, and ordinal $\alpha \geq 1$. Now if $X$ is endowed with a Polish topology, then $c$ is called a \emph{Borel coloring} if,  additionally, $c^{-1}(\{\beta\})$ is a Borel set, for every $\beta < \alpha$ --- i.e., every \emph{maximally monochromatic} set is Borel. The Borel chromatic number of $G$, denoted by $\chi_B (G)$, is the least cardinality of an ordinal $\alpha$ for which there exists a Borel $\alpha$-coloring of $G$.

Since we are working only with graphs on Polish spaces, our Borel chromatic numbers are bounded by $2^{\aleph_0}$. We will later see that, when uncountable, Borel chromatic numbers may assume different values in different models of set theory.

At the heart of the field of descriptive graph combinatorics is the $G_0$-di-chotomy: it says that there exists a closed graph which is minimal for analytic graphs of uncountable Borel chromatic numbers, thus $\chi_B (G_0)$ is the smallest possible uncountable Borel chromatic number for an analytic graph. 

For ZFC results about Borel chromatic numbers of various classes of graphs, the reader is refered to the founding article \cite{kechris1999borel}. 

In \cite{geschke2011} these numbers were first studied from the lens of consistency results, and in \cite{gasgesch2022} various Borel chromatic numbers of graphs were computed in models of set theory obtained by forcing with uniform tree-forcing notions. Of particular interest is the $F_\sigma$ equivalence relation $E_0$ on $2^\omega$, defined by
\[x E_0 y \leftrightarrow \forall^\infty n\ (x(n) = y(n)).\]
We also may think of equivalence relations as graphs with the identity included in order to make sense of their Borel chromatic numbers. This relation has been extensively studied by Kechris, Harrington and Louveau (e.g., \cite{harrington1990glimm}), Khomskii and Brendle (e.g., \cite{brendle2012polarized}) etc. 

Geschke and the second author asked  \cite[Question 5.2]{gasgesch2022} whether $\chi_B(E_0)$ is consistently smaller than the bounding number $\mathfrak{b}$, another of the mentioned cardinal characteristics of the continuum. 

We devise a new stronger notion of local countability for a graph, \emph{$\ell$-unbounded\-ness}, and show that if $X$ is a compact totally disconnected Polish space, and $G$ is an $F_\sigma$ $\ell$-unbounded graph on $X$, then after iteratively adding any number of Laver reals to a model of set theory, there still exists a ground model Borel coloring of $G$ (see Theorem \ref{laverthm}, item b). If we replace ``$\ell$-unboundedness'' for ``local countability'', then we have the same result by adding Miller reals instead. Theorem \ref{laverthm} item a was independently proved by Zapletal, but for closed graphs instead. His methods rely on the heavy machinery of his \emph{idealized forcing} (see \cite{zapletal2008forcing}), as well as iterable properties for ``sufficiently definable and homogeneous ideals''. The approach we take here is completely different and we resort only to classical combinatorical arguments of the forcings involved.

This answers positively the aforementioned question: in the model obtained by adding $\aleph_2$ Laver reals to a model of CH, $\chi_B (E_0) < \mathfrak{b}$ (see Corollary \ref{chiE0b}). As another consequence of our result, we shall be able to solve an open problem in the field of set theory of the real numbers: in this field, researchers usually study \emph{regularity properties}, i.e., properties of good behaviour of sets of real numbers and whether different properties can be \emph{separated}. If $\Gamma_0$ and $\Gamma_1$ are classes of sets of reals and $P$ and $Q$ are two regularity properties, we say that a model \emph{separates $\Gamma_0(P)$ from $\Gamma_1(Q)$} if in
this model every set in $\Gamma_1$ has property $Q$, but there is a set in $\Gamma_0$ that does not have property $P$. Many of these separation results are known for classes on the second level of the projective hierarchy. 

Fischer, Friedman and Khomskii asked \cite[Question 6.3]{Fischer2014CichosDR} whether it is possible to separate the Silver measurability of all $\boldsymbol{\Delta}^1_2$ sets from the Laver measurability of all $\boldsymbol{\Sigma}^1_2$ sets. This question has also been mentioned open by Brendle and L\"{o}we in \cite[Fig. 1]{eventbl} and by Ikegami in \cite[Fig. 2.1]{Ikegami2010GamesIS}

Mostly through the the work of Ikegami \cite[Theorem 1.3]{ike1}, it is now known that these notions of measurability at the second level of the projective hierarchy depend on the amount of generic reals that are added to $L$, the constructible universe.

Our result, together with Ikegami's theorem, answers the question from Fischer, Friedman and Khomskii positively: in the model obtained by iteratively adding $\aleph_1$ Laver reals to $L$, the constructible universe, all $\boldsymbol{\Sigma}^1_2$ sets are Laver measurable, but not all $\boldsymbol{\Delta}^1_2$ sets are Silver measurable (see Corollary \ref{silvervslaver}).

Note that this also answers a question of Brendle, Halbeisen and L\"owe: they asked whether the existence of sufficiently many splitting reals over $L$ already implies the Silver measurability of all $\boldsymbol{\Delta}^1_2$ sets \cite[Question 2]{brendle_halbeisen_lwe_2005}. The answer is `No', from Corollary \ref{splitting}.

In the next section we discuss the important feature of \textit{minimality} possessed by Laver forcing, where the key notion of \emph{guiding real} is introduced. In Section \ref{mainsection} we prove the single step version of Theorem \ref{laverthm} (Lemma \ref{laverlem}) and iterations of superperfect forcings. We discuss the applications to regularity properties in Section \ref{regularities}, and mention a few open questions in Section \ref{questions}.

\section{Guiding reals and minimality}\label{minimality}

A tree $p \subseteq \omega^{<\omega}$ is a \textit{Laver tree} iff every $t \in p$ extending the stem has infinitely many immediate successors in $p$. The \textit{Laver forcing}, $\mathbb{L}$, consists of Laver trees ordered by inclusion. 

Let us fix some notation: for a Laver tree $p \subseteq \omega^{<\omega}$,  there exists a natural bijection $\sigma \mapsto \sigma^\ast$ from $\omega^{<\omega}$ to its set of splitting nodes, $\mathrm{spl}(p)$, such that $\emptyset^\ast$ is the stem of $p$; and for each $n \in \omega$, $(\sigma^\smallfrown n)^\ast$ is the minimal splitting node of $p$ extending the $n$-th element of $\mathrm{succ}_p (\sigma^\ast)$, where $\mathrm{succ}_p (\sigma^\ast)$ denotes the set of immediate successors of $\sigma^\ast$ in $p$. The operation $*$ is easily extended to $\omega^\omega$ by $a^* = \bigcup_{n \in \omega} (a \restriction n)^*$, for $a \in \omega^\omega$.

The set of branches through $p$ is $[p] = \{x \in \omega^\omega\ |\ \forall n\ (x \restriction n) \in p\}$.

We also define, for a Laver tree $p \subseteq \omega^{<\omega}$, and $\sigma \in \omega^{<\omega}$, 
\[p \ast \sigma = \left\{s \in p\ |\ \sigma^* \subseteq s \text{ or } s \subseteq \sigma^*\right\}.\]

The Laver forcing satisfies \emph{Axiom A} \cite[Definition 7.1.1]{bartoszynski1995set}. Axiom A forcing notions are pretty well-behaved in the sense that, it is possible to do a \emph{fusion argument} for them, which can often easily be extended to their iterations.

Let $L_n (p) = \{\sigma^\ast \ |\ \sigma \in n^n\}$ denote the \textit{$n$-th diagonal level of $p$}, for each $n \in \omega$. 

Now a sequence $(\leq_n)_{n \in \omega}$ of partial orders is defined as follows: for every $n \in \omega$,
\[q \leq_n p \Leftrightarrow q \leq p \text{ and } L_n (q) = L_n (p).\]

This way, if $(p_n)_{n \in \omega}$ is a sequence of Laver trees such that $p_{n+1} \leq_n p_n$, for all $n \in \omega$, then $q = \bigcap_{n \in \omega} p_n$ is a Laver tree. 

Finally, recall the \emph{pure decision property} for Laver forcing: if $p \in \mathbb{L}$ and $\varphi$ is a formula of the forcing language, then there exists a stem-preserving extension $q \leq p$ such that $q$ decides $\varphi$.

Let $\dot{x}$ be a name for an element of $2^\omega$ and $p$ be a condition forcing it. Roughly speaking, the backbone of $\dot{x}$ is composed of a sequence of ground model reals (the guiding reals) that approximate $\dot{x}$ in a helpful manner.

\begin{claim}\label{guidingreal} There exists a stem-preserving extension $q \leq p$ with the following property: for every $\sigma \in \omega^{<\omega}$, there exists a ground model real $x_\sigma \in 2^\omega$ such that, for all $k \in \omega$,
\[q \ast \sigma^\smallfrown k \Vdash \dot{x} \restriction (|\sigma| + k) = x_\sigma \restriction (|\sigma| + k).\]
\end{claim}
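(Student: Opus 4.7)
The plan is to construct $q$ recursively along $\omega^{<\omega}$, simultaneously defining the splitting nodes $\sigma^\ast_q$, witnessing Laver subtrees $q^\sigma \leq p$ with stem $\sigma^\ast_q$, and ground model reals $x_\sigma \in 2^\omega$, while maintaining the invariant
\[
q^\sigma \Vdash \dot{x} \restriction |\sigma| = x_\sigma \restriction |\sigma|.
\]
The base case $\sigma = \emptyset$ is trivial: take $q^\emptyset := p$ (and declare $\emptyset^\ast_q := \emptyset^\ast_p$, with $x_\emptyset \restriction 0$ empty), so the invariant holds vacuously.

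The heart of the argument is the inductive step for a fixed $\sigma$. I iterate the pure decision property \emph{below $q^\sigma$ itself} to produce a chain
\[
q^\sigma = r_0 \geq_{\mathrm{stem}} r_1 \geq_{\mathrm{stem}} r_2 \geq_{\mathrm{stem}} \cdots
\]
with $r_{m+1} \Vdash \dot{x}(|\sigma| + m) = d_m$ for some $d_m \in \{0, 1\}$. Setting $x_\sigma(|\sigma| + m) := d_m$ completes $x_\sigma$ as a ground model real, and a quick induction gives $r_m \Vdash \dot{x} \restriction (|\sigma|+m) = x_\sigma \restriction (|\sigma|+m)$ for every $m$. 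Each $r_m$ is a Laver tree with stem $\sigma^\ast_q$, so the set $A_m$ of immediate successors of $\sigma^\ast_q$ in $r_m$ is infinite and $A_0 \supseteq A_1 \supseteq \cdots$; I diagonalize to pick pairwise distinct $s_k \in A_{k+1}$. Let $q^{\sigma^\smallfrown k}$ be the subtree of $r_{k+1}$ consisting of all nodes comparable to $s_k$, which is a Laver tree with stem $s_k$ (every node above the stem of a Laver tree is itself a splitting node), and declare $(\sigma^\smallfrown k)^\ast_q := s_k$.

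The invariant persists for $\sigma^\smallfrown k$ by setting $x_{\sigma^\smallfrown k} \restriction (|\sigma|+1) := x_\sigma \restriction (|\sigma|+1)$, and in fact $q^{\sigma^\smallfrown k} \Vdash \dot{x} \restriction (|\sigma|+k+1) = x_\sigma \restriction (|\sigma|+k+1)$, which is strictly stronger than what the claim demands. The final tree $q$ has stem $\emptyset^\ast_p$ and splitting nodes exactly $\{\sigma^\ast_q : \sigma \in \omega^{<\omega}\}$ with prescribed immediate successors; it is a stem-preserving extension of $p$, and by construction $q \ast \sigma^\smallfrown k$ is contained in $q^{\sigma^\smallfrown k}$, so it forces the required relation.

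The principal obstacle is the cross-$k$ compatibility of the decisions: to consolidate what the conditions $q \ast \sigma^\smallfrown k$ decide about $\dot{x}$ for various $k$ into a \emph{single} real $x_\sigma$, one needs the forced value of $\dot{x}(|\sigma|+j)$ to agree for all $k > j$. A naive successor-by-successor application of pure decision beneath $q^\sigma \ast \langle k \rangle$ would generically destroy this coordination. The fix is precisely to perform the pure decision below $q^\sigma$ itself, producing a nested chain $r_0 \geq r_1 \geq \cdots$ that fixes each bit of $x_\sigma$ globally and uniformly, before the diagonal assignment of successors distributes the right $r_k$ to each branch.
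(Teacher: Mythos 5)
Your proof is correct, but it reaches the conclusion by a genuinely different mechanism than the paper. The paper works \emph{per successor}: for each $k$ it separately finds a stem-preserving $r_k \leq p \ast \sigma^\smallfrown k$ deciding $\dot{x} \restriction (|\sigma|+k)$, and since these decisions need not cohere across $k$, it invokes compactness of $2^\omega$ to extract a subsequence $I$ along which the decided initial segments converge to a single $x_\sigma$, amalgamates $r' = \bigcup_{k \in I} r_k$, and then runs a $\leq_n$-fusion over the diagonal levels to handle all $\sigma$ at once. You instead decide the bits of $\dot{x}$ \emph{globally} below the whole subtree at $\sigma^*$ via a nested chain $r_0 \geq r_1 \geq \cdots$ of stem-preserving pure decisions, so that coherence of the values across successors is automatic, and then hand the $k$-th immediate successor a subtree of $r_{k+1}$; the final condition is assembled by a node-by-node recursion rather than a level-by-level fusion. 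Your route is more elementary (no compactness, no subsequence extraction), yields a marginally stronger conclusion ($|\sigma|+k+1$ bits decided below the $k$-th successor), and gives for free the coherence $x_{\sigma^\smallfrown k} \restriction (|\sigma|+1) = x_\sigma \restriction (|\sigma|+1)$ that the paper needs afterwards to define the continuous reading $f(a^*) = \lim x_{a \restriction n}$. What the paper's compactness argument buys is a template that does not depend on deciding a name coordinate-by-coordinate, which is the form reused verbatim in the iterated version (the claim defining $x_\sigma^\gamma$), though your chain argument would adapt there as well. One small point worth making explicit in your write-up: the intersection of your chain $r_0 \geq r_1 \geq \cdots$ need not be a condition, and your argument correctly never uses it --- only each $r_{k+1}$ individually --- but a reader may stumble there, so it is worth a sentence.
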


\begin{proof}

Note that for every $r \leq p$ and $\sigma \in \omega^{<\omega}$, one may find a stem-preserving extension $r_k \leq r \ast \sigma^\smallfrown k$ that decides $\dot{x} \restriction (|\sigma|+ k)$, for each $k \in \omega$. Fix $(x_k)_{k \in \omega}$ a sequence such that $x_k \in [\dot{x}_{r_{k}}]$, for every $k \in \omega$, where $\dot{x}_{r_{k}}$ is the longest initial segment decided by $r_{k}$. Since the space $2^\omega$ is compact, there exists $I \in [\omega]^{\omega}$ such that $(x_k)_{k \in I}$ converges and we let $x_\sigma = \lim_{k \in I} x_k$, which is defined for the condition $r' = \bigcup_{k \in I} r_k$.

From this observation, one may construct a fusion sequence $(p_n)_{n \in \omega}$ such that, for each $n, k \in \omega$, and $\sigma \in n^n$, there exists $x_\sigma \in 2^\omega$ such that
\[p_n \ast \sigma^\smallfrown k \Vdash \dot{x} \restriction (|\sigma| + k) = x_\sigma \restriction (|\sigma| + k).\]
Then $q = \bigcap_{n \in \omega} p_n$ is our desired condition. \qedhere

\end{proof}

The real $x_\sigma$ is called the \textit{$\sigma$-guiding real}. This automatically gives us a continuous ground model function $f: [q] \rightarrow 2^\omega$, defined by
\[f(a^*) = \lim x_{a \restriction n},\]
for each $a \in \omega^\omega$ (hence $a^* \in [q]$), such that $q \Vdash f(x_{\mathrm{gen}}) = \dot{x}$. This shows that Laver forcing has the \textit{continuous reading of names}  \cite[Definition 3.1.1.]{zapletal2008forcing}. 

For this reason, let us assume that $p$ is already chosen so that $x_\sigma$ is defined for $p$, for every $\sigma \in \omega^{<\omega}$. It turns out that $\dot{x}$ is ground model iff $\dot{x} = x_\sigma$, for some $\sigma \in \omega^{<\omega}$. If this is not the case, then it is possible to define a $p$-rank on $\omega^{<\omega}$ as follows: for $\sigma \in \omega^{<\omega}$,
    \begin{align*}
        \begin{cases}
        r (\sigma) = 0 &\leftrightarrow \exists^\infty n \in \omega\ \left(x_\sigma \neq x_{\sigma^\smallfrown n}\right)\\
        r (\sigma) = k > 0 &\leftrightarrow \neg\ r (\sigma) < k \land \exists^\infty n \in \omega\ \left( r (\sigma^\smallfrown n) < k\right).
        \end{cases}
    \end{align*}

There may be no levels of $\omega^{<\omega}$ for which every node at this level has $p$-rank zero. However, there will be \textit{frontiers} with this property:

Say that an antichain $A \subseteq p$ is a \textit{frontier of $p$} iff every branch through $p$ has exactly one initial segment in $A$ --- i.e., for every $x \in [p]$, there exists a unique $n \in \omega$ such that $x \restriction n \in A$. Say that $A \subseteq \omega^{<\omega}$ is a $p$-frontier iff $A^* = \{\sigma^*\ |\ \sigma \in \omega^{<\omega}\}$ is a frontier of $p$. A sequence $(A_n)_{n \in \omega}$ of $p$-frontiers forms a $p$-\textit{chain} iff for all $\sigma \in A_{n+1}$, there exists a unique $\tau \in A_n$ such that $\tau \subsetneq \sigma$.

\begin{claim}(\cite[Theorem 16]{BKW}) There exists a $p$-chain $(A_n)_{n \in \omega}$, each consisting of rank zero nodes, such that $x_{\tau \restriction m} = x_\sigma$, for all $\sigma \subsetneq \tau$ with $\sigma \in \mathrm{succ}(A_n)$ and $\tau \in \mathrm{succ}(A_{n+1})$; and all $m \in \omega$ with $|\sigma| \leq m < |\tau|$.
\end{claim}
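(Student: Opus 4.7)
The plan is to construct $(A_n)_{n \in \omega}$ by recursion, alternating two operations on the tree: identifying a frontier of rank-zero nodes using the definition of $r$, and then thinning the tree beyond each such node to enforce stabilization of the guiding reals until the next frontier is reached.

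A preliminary density fact is required: below every $\sigma \in \omega^{<\omega}$ there is a node of rank zero. If no $\tau \supseteq \sigma$ had $r(\tau) = 0$, then at every such $\tau$ only finitely many $n$ would satisfy $x_{\tau^\smallfrown n} \neq x_\tau$ (the negation of the clause defining $r(\tau) = 0$), so cofinitely many children would preserve $x_\tau$. A fusion below $\sigma^*$ retaining only such \emph{conservative} successors at each splitting node produces a Laver subcondition on which the guiding real is constantly $x_\sigma$, forcing $\dot{x} = x_\sigma$ to be ground model and contradicting the standing assumption. Granted density, for each branch $a^* \in [p]$ let $n$ be minimal with $r(a \restriction n) = 0$; the collection of these initial segments forms a $p$-frontier $A_0$ of rank-zero nodes.

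Inductively, assume $A_n$ is a $p$-frontier of rank-zero nodes. For each $\sigma_0 \in A_n$, rank zero yields infinitely many $k$ with $x_{\sigma_0^\smallfrown k} \neq x_{\sigma_0}$; thin $p$ so that only such \emph{diverging} successors $\sigma = \sigma_0^\smallfrown k$ remain above $\sigma_0^*$. From each such $\sigma$, recursively extend through conservative successors: at every $\tau \supseteq \sigma$ with $r(\tau) > 0$, cofinitely many $\tau^\smallfrown j$ satisfy $x_{\tau^\smallfrown j} = x_\tau$, and we thin to keep only these while maintaining the Laver-tree property. By density, this recursion terminates on every branch at a rank-zero node $\tau_0$; collecting all such $\tau_0$ produces a $p$-frontier $A_{n+1}$ of rank-zero nodes. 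By construction $x_{\tau_0 \restriction m} = x_\sigma$ for all $|\sigma| \leq m \leq |\tau_0|$, so for $\tau = \tau_0^\smallfrown j \in \mathrm{succ}(A_{n+1})$ and $|\sigma| \leq m < |\tau|$ we obtain $x_{\tau \restriction m} = x_\sigma$, as required.

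The main technical obstacle is the bookkeeping: all the thinning operations must be organized into a fusion sequence $(p_n)_{n \in \omega}$ with $p_{n+1} \leq_n p_n$, so that $q = \bigcap_n p_n$ is a single Laver condition below $p$ on which every $A_n$ is simultaneously realized (after renaming $q$ as $p$, this yields the $p$-chain in the statement). Laver-tree preservation at every thinning step is automatic because ``infinitely many diverging children'' at rank-zero nodes and ``cofinitely many conservative children'' at positive-rank nodes both retain infinitely many successors at each splitting node. This follows the scheme of \cite[Theorem~16]{BKW} cited in the claim.
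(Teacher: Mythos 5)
The paper itself offers no proof of this claim (it is imported from \cite[Theorem 16]{BKW}), so your argument has to stand on its own. Its overall shape --- frontiers of rank-zero nodes separated by stretches of ``conservative'' successors, assembled by a fusion --- is the right one, but there is a genuine gap at the point where each $A_n$ is declared to be a frontier. What you actually establish is that rank-zero nodes are \emph{dense} (below every node there is one), and you then take $A_0$ to consist of the minimal rank-zero initial segments of the branches. Density of a set of nodes does not imply that every branch meets it: it is entirely consistent with your density lemma that some branch $a$ has $r(a\restriction n)\geq 1$ for every $n$ while every node off that branch quickly reaches rank zero (note that $r(\tau)=1$ only requires \emph{infinitely many} successors of $\tau$ to have rank zero, not cofinitely many). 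For such a branch no minimal rank-zero initial segment exists, and $A_0$ fails to be a frontier. The same problem recurs in the inductive step, where ``by density, this recursion terminates on every branch at a rank-zero node'' is asserted: your thinning keeps \emph{all} conservative successors at positive-rank nodes, and nothing prevents an infinite branch consisting entirely of conservative, positive-rank (or rankless) nodes from surviving the thinning, in which case $A_{n+1}$ again misses a branch.

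This is precisely what the positive ranks, which your proof never uses, are for. The standard repair is: (i) show that every node carries an ordinal rank --- if $\sigma$ had no rank, then cofinitely many of its successors would be both conservative and rankless, and the resulting conservative fusion below $\sigma^*$ would force $\dot x = x_\sigma$, the same contradiction you already derive in the density argument; (ii) at a node $\tau$ of rank $\alpha>0$, keep only those successors that are simultaneously conservative and of rank strictly less than $\alpha$ (an infinite set, being the intersection of a cofinite set with an infinite one). Well-foundedness of the ordinal rank then guarantees that every branch of the thinned tree reaches a rank-zero node, so the minimal such nodes genuinely form a frontier, and the stabilization $x_{\tau\restriction m}=x_\sigma$ between consecutive frontiers comes for free from conservativity of the retained successors. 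With that modification the remainder of your argument --- the verification of the $p$-chain property and the fusion bookkeeping --- goes through as written.
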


Without loss of generality, assume $p$ is the Laver tree generated by the initial segments of these frontiers. From this and a pruning argument (for a detailed proof see \cite[Theorem 7]{groszek_1987}), we already have that $f \restriction [p]$ is injective, which implies the so-called \textit{minimality} for the Laver forcing. Moreover, by identifying the nodes of $p$ having the same guiding real, assume that every $\sigma \in \omega^{<\omega}$ has $p$-rank zero. This can be done as follows:

Let $\sim$ be the equivalence relation on $\omega^{<\omega}$ defined by 
\[\sigma \sim \tau \leftrightarrow x_\sigma = x_\tau,\]
for each $\sigma, \tau \in \omega^{<\omega}$; let  $\faktor{p}{\sim}$ be the set of equivalence classes, and $\pi: \omega^{< \omega} \rightarrow \faktor{p}{\sim}$ be the projection $\sigma \mapsto [\sigma]_{\sim}$, for each $\sigma \in \omega^{<\omega}$. 

Say that $\boldsymbol{a} \in \faktor{p}{\sim}$ is an immediate successor of some different class $\boldsymbol{b}$ iff there exist $\sigma \in \boldsymbol{a}, \tau \in \boldsymbol{b}$ such that $\sigma$ is an immediate successor of $\tau$. Note that, since $\dot{x}$ is not a ground-model real, then every node of $\faktor{p}{\sim}$ has infinitely many immediate successors. From this, let $j$ be a bijection identifying the nodes of $\omega^{<\omega}$ with $\faktor{p}{\sim}$.

Now, for every equivalence class $\boldsymbol{a}$, we define $\mathcal{M}(\boldsymbol{a})$ to be the set of all maximal nodes of $\boldsymbol{a}$ (i.e., $\sigma \in \mathcal{M}(\boldsymbol{a})$ iff there is no $\tau \supsetneq \sigma$ in $\boldsymbol{a}$). Let $G_{\boldsymbol{a}}: \omega \rightarrow \mathcal{M}(\boldsymbol{a})$ be an enumeration of $\mathcal{M}(\boldsymbol{a})$ such that $G_{\boldsymbol{a}}^{-1} (\{\sigma\})$ is infinite, for every $\sigma \in \mathcal{M}(\boldsymbol{a})$ (i.e., it enumerates $\mathcal{M}(\boldsymbol{a})$ with infinitely many repetitions). 

Now, note that if $i : \omega^{< \omega} \rightarrow \omega^{<\omega}$ is such that $i(\emptyset) = \emptyset$; and $j(i(\sigma^{\smallfrown}n)) = [\tau]_\sim$, for some $\tau$ immediate successor of $G_{[j(i(\sigma))]}(n)$, then $(\pi^{-1} [\mathrm{ran} (j \circ i)])^{*}$ is a stem-preserving Laver subtree of $p$. In our proofs, a function $i$ with this property is constructed and, with this, we know how to pull-back from the equivalence classes to a Laver subtree of $p$. For this reason, from now on we shall always assume that $p$ is defined to have rank $0$ on every node extending the stem, and convey that it is always possible to run the argument above using frontiers along with infinite repetition enumeration of their nodes.

The \textit{Laver model} is the generic extension obtained by forcing with a countable support iteration of $\mathbb{L}$, of length $\omega_2$, over the universe $V$.

\section{Cooking-up names for Borel independent sets}\label{mainsection}

If $c: X \rightarrow \alpha$ is a $G$-coloring of $X$, the \emph{maximal $G$-monocromatic} sets are the sets of the form $c^{-1}(\{\beta\})$, for $\beta < \alpha$, which satisfy the property $c^{-1}(\{\beta\})^2 \cap G = \emptyset$. The sets $A \subseteq X$ satisfying this property (i.e., $A^2 \cap G = \emptyset$) are said to be \textit{$G$-independent}, and using this we shall redefine Borel chromatic numbers in a more convenient way:  

The \textit{Borel chromatic number} of $G$, $\chi_B (G)$, is the least cardinality of a family $\mathcal{F}$, consisting of Borel $G$-independent sets, such that $\bigcup \mathcal{F} = X$. 

Recall that a graph $G$ on a Polish space $X$ is an $F_\sigma$ graph iff there exists a sequence $(C_n)_{n \in \omega}$, of closed subsets of $X^2 \setminus \mathrm{Id}_X$ such that $G = \bigcup_{n \in \omega} C_n$, where $\mathrm{Id}_X$ is the identity on $X$. It is locally countable iff the set $\{y \in X\ |\ (x, y) \in G\}$ is countable, for every $x \in X$.

If $(C_n)_{n \in \omega}$ is a cover of $G$, the \emph{$G$-locator of $(C_n)_{n \in \omega}$}, $\ell: X^2 \rightarrow \omega$, is defined by
    \begin{equation*}
        \ell(x, y) = 
        \begin{cases}
            \mathrm{min}\{n+1\ |\ (x, y) \in C_n\}, &\text{if $(x, y) \in G$}\\
            0, &\text{if $x = y$}.\\
            \infty, &\text{if $(x, y) \notin G \cup \mathrm{Id}_X$}.
        \end{cases}
    \end{equation*}
Say that $G$ is \emph{$\ell$-unbounded} iff for every $(x, y) \in G$, and $n$ such that $\ell (x, y) = n$, there exists an open set $O$ around $x$ such that $\ell(z, y) > n$, for every $z \in O \setminus \{x\}$.

We also let $\ell (A, B) = \min \{\ell(a, b)\ |\ (a, b) \in A \times B\}$, for $A, B \subseteq X$. Note that $\ell$ is identically $\infty$ only on the $G$-independent sets.

\begin{thm}\label{laverthm} Let $G$ be an $F_\sigma$ graph, with closed cover $(C_n)_{n \in \omega}$, defined on a totally disconnected compact Polish space $X$. 
\begin{itemize}
    \item[(a)] If $G$ is locally countable then, in the Miller model, every point in the completion of $X$ is contained in a Borel $G$-independent set coded in the ground model; and
    
    \item[(b)] If $G$ is $\ell$-unbounded then, in the Laver model, every point in the completion of $X$ is contained in a Borel $G$-independent set coded in the ground model.
\end{itemize}

Hence $\chi_B (G) \leq |2^{\aleph_0} \cap V|$ in each of the above cases.
\end{thm}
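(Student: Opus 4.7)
The plan is to reduce Theorem \ref{laverthm} to a single-step statement — the forthcoming Lemma \ref{laverlem} — which asserts that for any Laver condition $p$ and any $\mathbb{L}$-name $\dot x$ for a point of $X$, there exist $q\le p$ and a ground model Borel $G$-independent set $B\subseteq X$ with $q\Vdash\dot x\in B$; part (a) reduces to the analogous statement for Miller forcing and locally countable $G$. Promotion of this lemma from the single step to the $\omega_2$-long countable support iteration producing the Laver (resp.\ Miller) model is handled by the standard Axiom A fusion along such an iteration. The corollary $\chi_B(G)\le|2^{\aleph_0}\cap V|$ then follows because the ground model Borel $G$-independent sets cover $X$ in the extension and there are at most $|2^{\aleph_0}\cap V|$ such sets.

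To prove the single-step lemma in case (b), I assume $\dot x$ is not a ground model real and apply Claim \ref{guidingreal} together with the normalizations of Section \ref{minimality}: I may suppose $p$ carries pairwise distinct guiding reals $x_\sigma\in X$ for every $\sigma\in\omega^{<\omega}$, all of rank zero, yielding a continuous ground-model injection $f\colon[p]\to X$ with $p\Vdash f(x_{\mathrm{gen}})=\dot x$. I then build a fusion $p=p_0\ge_0 p_1\ge_1\cdots$ whose intersection $q=\bigcap_n p_n$ satisfies $f([q])^2\cap G=\emptyset$; the compact, hence Borel, set $B=f([q])$ then witnesses the lemma.

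The core fusion step arranges, at stage $n$, that for every pair of distinct $\sigma,\tau\in n^n$ and every $k\le n$,
\[
  \bigl(f([p_{n+1}\ast\sigma])\times f([p_{n+1}\ast\tau])\bigr)\cap C_k=\emptyset,
\]
so that any two distinct branches of $q$, which must lie in different $L_{n_0}$-cones for some $n_0$, have their images outside every $C_k$. For a given pair $(\sigma,\tau)$, the guiding reals $x_\sigma$ and $x_\tau$ are fixed ground model points and $x_{\sigma^\smallfrown j}\to x_\sigma$, so a standard thinning of successors drives the cone-images $f([p_{n+1}\ast\sigma])$ and $f([p_{n+1}\ast\tau])$ into arbitrarily small clopen neighborhoods of $x_\sigma$ and $x_\tau$. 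If $(x_\sigma,x_\tau)\notin\bigcup_{k\le n}C_k$, these neighborhoods can be chosen with product disjoint from this closed set. If instead $(x_\sigma,x_\tau)\in G$, I invoke $\ell$-unboundedness iteratively to produce a deleted neighborhood of $x_\sigma$ whose points $z$ all satisfy $(z,x_\tau)\notin\bigcup_{k\le n}C_k$, and refine so that the new guiding reals of all descendants of $\sigma^*$ lie in it.

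The hard part is precisely this last case: the limit $x_\sigma$ itself sits in the cone-image as a limit of children's guiding reals (by compactness), so a purely topological thinning cannot remove the pair $(x_\sigma,x_\tau)$ from the product when it lies in $G$. The role of $\ell$-unboundedness is to let us pre-empt this difficulty already during the construction of $f$: exploiting the freedom in Claim \ref{guidingreal} to choose each approximation $x_k\in[\dot x_{r_k}]$ inside the deleted neighborhood furnished by $\ell$-unboundedness, I can ensure recursively that no pair of guiding reals chosen along the fusion belongs to $G$. Once this careful recursive choice is in place, $f([q])$ is $G$-independent, the single-step lemma follows, and the iteration argument then upgrades the conclusion to the full Laver (resp.\ Miller) model, giving Theorem \ref{laverthm}.
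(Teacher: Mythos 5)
Your overall architecture matches the paper's: a single-step lemma producing, below any condition, a subtree whose image under the guiding-real map $f$ is $G$-independent, followed by an upgrade to the countable support iteration. Two points, however, are genuine gaps rather than omitted routine detail. First, the passage to the iteration is not ``standard Axiom A fusion.'' A name $\dot x$ in the iteration is not captured at any single coordinate, so the single-step lemma cannot simply be applied stagewise; the paper has to introduce the notion of a $G$-$(F,\eta)$-faithful condition and prove Lemma \ref{faithfulness2}, which maintains the $\ell$-separation \emph{simultaneously} across a growing finite set of coordinates (taking the maximum of the separation bounds over $\gamma\in F$), so that in the limit one obtains a ground model continuous injection of the whole product $(\omega^\omega)^{\mathrm{supp}(q)}$ onto a $G$-independent set containing $\dot x$. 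This multi-coordinate invariant is the technical heart of the theorem, and your proposal does not supply it.

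Second, in the single step your stated invariant (cone images pairwise avoid $C_k$ for $k\le n$) is the right one, but the mechanism you offer for achieving it in the adjacent case is off. The freedom in Claim \ref{guidingreal} to pick $x_k\in[\dot x_{r_k}]$ does not influence the limit $x_\sigma$, which is determined by the decided initial segments $\dot x_{r_k}$; the only genuine lever is the choice of which successors of a node to retain. Moreover, in the Laver case the candidate guiding reals $x_{\sigma^\smallfrown j}$ converge to $x_\sigma$, and a diagonal obstruction (the $j$-th candidate may be $C_j$-adjacent to a fixed $x_\tau$) means one cannot in general arrange that \emph{no} pair of retained guiding reals is $G$-adjacent; one can only push $\ell$ above any prescribed finite bound. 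This is precisely why the paper's Lemma \ref{laverlem} formulates its invariant as growing quantitative lower bounds, $\ell(\cdot,\cdot)\ge|\sigma|+|\tau|-2|\Delta(\sigma,\tau)|$, measured between clopen neighborhoods $\left[x_{i(\sigma)}\restriction|i(\sigma)|+k_n\right]$ of the guiding reals (so that the bound passes to the branch images), rather than as outright non-adjacency of the guiding reals themselves. A smaller slip: $[q]$ is not compact for Laver or Miller trees, so $f([q])$ is Borel because $f$ is continuous and injective, not because it is a compact image.
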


This way, starting from $V$ a model of CH, we obtain:

\begin{cor}\label{chiE0b}
It is consistent with ZFC that $\chi_B (E_0) < \mathfrak{d}$; and that $\chi_B (E_0) < \mathfrak{b}$.
\end{cor}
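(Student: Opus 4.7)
The plan is to take $V \models \mathrm{CH}$ as the ground model and apply Theorem \ref{laverthm} to $E_0$, viewed as a graph (with identity adjoined) on the compact totally disconnected Polish space $2^\omega$. The natural $F_\sigma$ cover is $E_0 = \bigcup_{n \in \omega} C_n$ where $C_n = \{(x,y) \in 2^\omega \times 2^\omega : \forall m \geq n\ x(m) = y(m)\}$. Each $C_n$ is closed, and each $E_0$-class is countable (being a coset of $\bigoplus_\omega \mathbb{Z}/2\mathbb{Z}$), so $E_0$ is a locally countable $F_\sigma$ graph and part (a) of the theorem applies without further work.

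To invoke part (b), I first verify that $E_0$ is $\ell$-unbounded with respect to this cover. Fix $(x,y) \in E_0$ with $x \neq y$ and $\ell(x,y) = n$; then $x \restriction [n-1, \infty) = y \restriction [n-1, \infty)$ while $x(n-2) \neq y(n-2)$, so $n \geq 2$. The basic clopen $O = [x \restriction (n-1)]$ is a neighborhood of $x$ that excludes $y$. For any $z \in O \setminus \{x\}$, choose $m \geq n-1$ with $z(m) \neq x(m)$; since $x(m) = y(m)$, this gives $z(m) \neq y(m)$, so $(z,y) \notin C_{n-1}$ and hence $\ell(z,y) > n$ (either $\infty$ or at least $n+1$). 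Thus $E_0$ is $\ell$-unbounded.

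With $|2^{\aleph_0} \cap V| = \aleph_1$, pass to the Laver model $V[G]$. Theorem \ref{laverthm}(b) yields $\chi_B(E_0) \leq \aleph_1$ in $V[G]$. Because each iterand Laver real dominates every real of its intermediate model, and by properness any $\aleph_1$-size family of reals in $V[G]$ lies in some intermediate extension $V[G_\alpha]$ with $\alpha < \omega_2$, no such family can be unbounded; thus $\mathfrak{b}^{V[G]} = \aleph_2$, and using $\mathfrak{b} \leq \mathfrak{d}$ both $\chi_B(E_0) < \mathfrak{b}$ and $\chi_B(E_0) < \mathfrak{d}$ hold in $V[G]$. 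Alternatively, the $\mathfrak{d}$ statement can be extracted from the Miller model via part (a): Miller generics are unbounded over their intermediate models, and the same properness argument then yields $\mathfrak{d} = \aleph_2$ while Theorem \ref{laverthm}(a) bounds $\chi_B(E_0)$ by $\aleph_1$. The only step demanding genuine care is the $\ell$-unboundedness verification; once the natural $F_\sigma$ cover is fixed, however, it reduces to the elementary bit-flipping observation above.
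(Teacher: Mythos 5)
Your proposal is correct and follows essentially the same route as the paper: apply Theorem \ref{laverthm} to $E_0$ over a model of CH and use the standard facts that the Laver iteration forces $\mathfrak{b}=\aleph_2$ and the Miller iteration forces $\mathfrak{d}=\aleph_2$. Your explicit verification that $E_0$ is $\ell$-unbounded for the natural cover $C_n=\{(x,y):\forall m\geq n\ x(m)=y(m)\}$ is a detail the paper leaves implicit, and your observation that the Laver model alone witnesses both inequalities (via $\mathfrak{b}\leq\mathfrak{d}$) is a harmless simplification.
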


It is well-known that Laver forcing increases $\mathfrak{b}$, the \textit{bounding number} and that the Miller forcing increases $\mathfrak{d}$, the \textit{dominating number}. For a diagram involving common small cardinal characteristics of the continuum, and a few Borel chromatic numbers, see \cite{gasgesch2022}.

The reason why Theorem \ref{laverthm} can be proved for totally disconnected compact Polish spaces is because they are the continuous injective image of $2^\omega$ when they lack isolated points: i.e., if $X$ is a compact totally disconnected Polish space without isoalted points, then there exists a continuous injection $f: 2^\omega \rightarrow X$ such that $f[2^\omega] = X$. From this, if $G$ is an $F_\sigma$ locally countable graph on $X$, then \[f^*[G] = \left\{\left(f^{-1} (x), f^{-1} (y)\right) \in (2^\omega)^2\ |\ (x, y) \in G\right\}\] is an $F_\sigma$ locally countable graph on $2^\omega$; and $\chi_B (f^*[G]) = \chi_B (G)$. 

In order to prove Theorem \ref{laverthm}, we first investigate what happens when we add only one generic real to the universe. It turns out that for every real number of the generic extension, there exists a continuous injective function, coded in the ground model, whose image is a maximal monochromatic set and contains this number (Lemma \ref{laverlem}). 

As said before, it will be sufficient to consider $X = 2^\omega$, and then we may pull-back from $X$ to $2^\omega$ using a continuous homeomorphism, when $X$ is perfect:

\begin{lem}\label{laverlem} Let $G$ be an $F_\sigma$ graph on $2^\omega$, with closed cover $(C_n)_{n \in \omega}$. Then there exists a stem-preserving extension $q \leq p$ such that $f[q]$ is $G$-independent
\begin{itemize}
    \item[(a)] for the Miller forcing, if $G$ is locally countable; and
    \item[(b)] for the Laver forcing, if $G$ is $\ell$-unbounded.
\end{itemize}
\end{lem}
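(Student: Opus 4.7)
Start with $p$ already put in the form guaranteed by Section~\ref{minimality}: for every $\sigma \in \omega^{<\omega}$ there is a guiding real $x_\sigma$, every node past the stem has rank zero, and distinct $\sigma$ carry distinct guiding reals. Then $f \colon [p] \to 2^\omega$ given by $f(a^*) = \lim_n x_{a \restriction n}$ is a ground-model continuous map, and we want a stem-preserving $q \leq p$ with $f[[q]]$ being $G$-independent. The technical engine is the following consequence of the guiding real equation applied at $\sigma$ and at $\sigma^\smallfrown k$: $x_{\sigma^\smallfrown k} \restriction (|\sigma|+k) = x_\sigma \restriction (|\sigma|+k)$, so $x_{\sigma^\smallfrown k} \to x_\sigma$ in the Cantor topology, while rank zero supplies infinitely many $k$ with $x_{\sigma^\smallfrown k} \ne x_\sigma$. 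In particular, by restricting to children of sufficient depth we can trap $f[[p \ast \sigma^\smallfrown k]]$ inside any prescribed basic open neighbourhood of $x_\sigma$.

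\textbf{The fusion.} Construct $p = p_0 \geq_0 p_1 \geq_1 \cdots$, set $q = \bigcap_n p_n$, and maintain the invariant that for every $n$, every pair of distinct $\sigma, \tau \in L_n(p_n)$, and every $k < n$,
\[
\bigl(f[[p_n \ast \sigma]] \times f[[p_n \ast \tau]]\bigr) \cap C_k = \emptyset.
\]
Since $L_n(p_n)$ is finite, only finitely many triples need attention per stage. For a single triple $(\sigma, \tau, k)$: if $(x_\sigma, x_\tau) \notin C_k$, closedness of $C_k$ yields disjoint basic cylinders around $x_\sigma$ and $x_\tau$ whose product misses $C_k$, and the concentration observation lets us descend within each subtree until its $f$-image lies inside the chosen cylinder. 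If $(x_\sigma, x_\tau) \in C_k$ then $\ell(x_\sigma, x_\tau) \leq k+1$, and $\ell$-unboundedness supplies an open $O \ni x_\sigma$ with $\ell(z, x_\tau) > \ell(x_\sigma, x_\tau)$ for every $z \in O \setminus \{x_\sigma\}$; by convergence and rank zero, infinitely many children $\sigma^\smallfrown m$ satisfy $x_{\sigma^\smallfrown m} \in O \setminus \{x_\sigma\}$, and restricting $p_n \ast \sigma$ to these children strictly increases $\ell$ for the new pair. Iterating this push-up finitely many times drives $\ell$ above $k+1$, reducing the problem to the easy case. For part (a), the same scheme runs inside Miller fusion with local countability replacing $\ell$-unboundedness: since $x_\tau$ has only countably many $G$-neighbours and $f$ is injective on $[p]$, only countably many branches through $p \ast \sigma$ map into these neighbours, and they can be excised one child at a time along the cofinally many splitting levels of a superperfect tree without destroying the superperfect structure.

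\textbf{Main obstacle.} The principal difficulty is the bookkeeping. The invariant is global (all pairs, all $C_k$), yet each fusion step improves only finitely many triples and each application of $\ell$-unboundedness only increments $\ell$ by one; consequently the new diagonal level $L_n(p_n)$ must be chosen deep enough inside $p_{n-1}$ that all requisite iterations of the push-up step have been absorbed before it is committed. A further subtlety is preserving the stem, the rank-zero property, and the Laver (resp.\ Miller) tree structure through every refinement; this is handled uniformly by pulling back along the equivalence-class projection described at the end of Section~\ref{minimality}. Once the invariant holds at every stage, the closedness of each $C_k$ together with the concentration of $f[[p_n \ast \sigma]]$ on $x_\sigma$ upgrades the finite-level disjointness statements to full $G$-independence of $f[[q]]$.
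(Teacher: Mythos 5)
Your overall strategy --- guiding reals, the concentration of $f[[p\ast\sigma^\smallfrown k]]$ inside $[x_\sigma\restriction(|\sigma|+k)]$, one application of $\ell$-unboundedness (resp.\ local countability) per refinement, and closedness of the $C_n$'s to pass from guiding reals to clopen neighbourhoods --- is the same as the paper's. The genuine gap is in the fusion invariant for case (b). You require that for \emph{all} $k<n$ and all distinct $\sigma,\tau\in L_n(p_n)$ the full images of the subtrees be $C_k$-separated; unwinding, this demands $\ell$-distance greater than $n$ even between siblings that branched apart only one level ago. But each application of $\ell$-unboundedness raises $\ell$ by one and costs one level of immediate successors, and in a Laver tree the diagonal levels cannot be pushed deeper: every node past the stem is splitting, so $(\sigma^\smallfrown j)^*$ is literally the $j$-th immediate successor of $\sigma^*$ and $L_n(p_n)$ sits at depth exactly $n$ above the stem. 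Hence for siblings $\sigma,\tau\in L_n(p_n)$ the most your push-up can deliver is roughly $\ell(x_\sigma,x_\tau)\geq 2$, not $>n$, and your proposed remedy (``choose the new diagonal level deep enough that all requisite iterations of the push-up have been absorbed'') is exactly the move that Laver forcing forbids. That freedom \emph{is} available for Miller trees, which is why your case (a) is essentially workable (there the paper instead picks, using local countability and injectivity of $f$, a whole branch whose image is a non-neighbour of the finitely many guiding reals already chosen, and takes a deep initial segment of it).

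The repair is to grade the invariant, which is what the paper does: require only
\[
\ell\left(\left[x_{i(\sigma)}\restriction|i(\sigma)|+k_n\right],\left[x_{i(\tau)}\restriction|i(\tau)|+k_n\right]\right)\;\geq\;|\sigma|+|\tau|-2|\Delta(\sigma,\tau)|,
\]
so that the separation demanded of a pair grows with how long ago the pair branched --- precisely what one application of $\ell$-unboundedness per level of immediate successors can supply. Independence of the image still follows in the limit: for fixed distinct branches $a,b$ the witnessing nodes $\sigma_{a,n},\sigma_{b,n}$ of length $n+1$ have $|\Delta(\sigma_{a,n},\sigma_{b,n})|$ eventually constant while $|\sigma_{a,n}|+|\sigma_{b,n}|$ grows, so the guaranteed lower bound on $\ell(f(a),f(b))$ tends to infinity and $\ell(f(a),f(b))=\infty$. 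You should restate your fusion with this graded requirement (and then your concentration and closedness steps go through as you describe); as written, the absolute requirement ``all $k<n$ at level $n$'' is not achievable by any Laver fusion.
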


\begin{proof} 
    We first define an order-preserving injection $i: \omega^{<\omega} \rightarrow \omega^{<\omega}$, and a strictly increasing sequence $(k_{n})_{n \in \omega}$ of natural numbers such that for all $\sigma, \tau \in n^{\leq n}$,
    \begin{itemize}
        \item[(1)] $\ell\left(\left[x_{i(\sigma)} \restriction |i(\sigma)| + k_{n}\right], \left[x_{i(\tau)}\restriction |i(\tau)| + k_{n}\right]\right) \geq |\sigma| - |\tau|$, if $\tau \subseteq \sigma$; and
        
        \item[(2)] $\ell\left(\left[x_{i(\sigma)} \restriction |i(\sigma)| + k_{n}\right], \left[x_{i(\tau)} \restriction |i(\tau)| + k_{n}\right]\right) \geq |\sigma|+|\tau| - 2|\Delta(\sigma, \tau)|$, if $\sigma$ and $\tau$ are distinct, where $\Delta(\sigma, \tau)$ denotes the longest common initial segment of $\sigma$ and $\tau$.

    \end{itemize}
    Once this is done, we get that $q = \mathrm{ran}(i)^*= \{i(\sigma)^*\ |\ \sigma \in \omega^{<\omega}\}$ is our desired condition (i.e., a Miller or a Laver tree, depending which case we are considering).
    
    From this it easily follows that, if $a, b \in [q]$ are distinct, then $f(a)$ and $f(b)$ do not form an edge: in fact, for every $n \in \omega$, there exists $\sigma_{a, n}, \sigma_{b, n}$ such that $|\sigma_{a, n}| = |\sigma_{b, n}| = n+1$, $i(\sigma_{a, n})^* \subseteq a$ and $i(\sigma_{b, n})^* \subseteq b$. Then
    \[\ell(f(a), f(b)) \geq \ell\left(x_{i(\sigma_{a, n})}, x_{i(\sigma_{b, n})}\right) \geq 2(n + 1 - |\Delta(\sigma_{a,n}, \sigma_{b, n})|);\]
    and the sequence $|\Delta(\sigma_{a, n}, \sigma_{b, n})|$ is constant. Hence, $\ell (f(a), f(b)) = \infty$.
    
    Assume $i \restriction n^{\leq n}$ has been defined and let $\prec$ denote the lexicographic order on $\omega^{<\omega}$. By induction on $\sigma$, also assume $i (\tau)$ has been defined, for all $\tau \prec \sigma$.
    
    \begin{itemize}
        \item[(a)] From local countability, there exists a branch $a$ with $i(\sigma \restriction |\sigma|-1) \subseteq a$, and hence a long enough initial segment $i(\sigma) \subseteq a$ such that
            \begin{align*} 
            \ell\left(x_{i(\sigma)}, x_{i(\tau)}\right) >
                \begin{cases}
                    |\sigma| - |\tau|, &\text{ if $\tau \subseteq \sigma$; and}\\
                    |\sigma| + |\tau| - 2|\Delta(\sigma, \tau)|, &\text{ if $\tau$ and $\sigma$ are incompatible}.
                \end{cases}
            \end{align*}
        \item[(b)] From $\ell$-unboundedness, one can get the same as the item above, but $i(\sigma)$ can be chosen as an immediate successor of $i(\sigma \restriction |\sigma|-1)$.
    \end{itemize}

    In any case, it follows from the closedness of the $C_n$'s that there exists a natural number $k_{n+1}$ such that
    \[\ell\left(\left[x_{i(\sigma)}\restriction |i(\sigma)| + k_{n+1}\right],\left[x_{i(\tau)}\restriction |i(\tau)|+k_{n+1}\right]\right) \geq \ell\left(x_{i(\sigma)}, x_{i(\tau)}\right). \qedhere\]
\end{proof}

Our goal now is to prove some version of Lemma \ref{laverlem} for countable support iterations of the Laver forcing. The proof of this lemma will be useful to justify the proof of Claim \ref{faithfulness2}.

For an ordinal $\alpha \geq 1$, let $\mathbb{L}_{\alpha}$ denote the countable support iteration of $\mathbb{L}$. Let $F$ be a finite subset of $\alpha$ and $\eta: F \rightarrow \omega$. Say that $q \leq_{F, \eta} p$ iff  
\[\forall \gamma \in F \left(q \restriction \gamma \Vdash q(\gamma) \leq_{\eta(\gamma)} p(\gamma)\right).\]

Now if $\dot{x}$ is a name for an element of $2^\omega$ not added at proper stage of the iteration and $p$ is a condition forcing it, then we may define an iterated version of the guiding reals:

%== Last tree elements?

\begin{claim} For every $\gamma < \alpha$ and $\sigma \in \omega^{<\omega}$, there exists an $\mathbb{L}_\alpha$-condition $p_\sigma^\gamma \leq p$, and an $\mathbb{L}_\gamma$-name for a real $x_\sigma^\gamma$, such that $p_\sigma^\gamma \restriction \gamma$ forces that $p^{\gamma}_{\sigma}(\gamma) \leq_{0} p(\gamma)$; and
    \[p_\sigma^\gamma (\gamma) \ast (\sigma^\smallfrown k)^\smallfrown p_\sigma^\gamma \restriction (\gamma, \alpha) \Vdash \dot{x} \restriction (|\sigma| + k) = x_\sigma^\gamma \restriction (|\sigma| + k),\]
for all $k \in \omega$.
\end{claim}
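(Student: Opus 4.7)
The plan is to localize the fusion argument of Claim~\ref{guidingreal} to stage $\gamma$ of the iteration by working inside the intermediate extension $V[\dot{G}_\gamma]$. Viewing $\mathbb{L}_\alpha$ as the two-step iteration $\mathbb{L}_\gamma \ast \mathbb{L}_{[\gamma, \alpha)}$, the tail in $V[\dot{G}_\gamma]$ begins with a plain Laver factor at coordinate $\gamma$, namely $p(\gamma)[\dot{G}_\gamma]$, followed by the remaining iteration $p \restriction (\gamma, \alpha)$. Since $\dot{x}$ is not added at any proper stage of the iteration, it is in particular not in $V[\dot{G}_\gamma]$, so the two ingredients needed for Claim~\ref{guidingreal}---pure decision at the Laver factor, together with compactness of $2^\omega$---are available there.

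Concretely, working in $V[\dot{G}_\gamma]$ and fixing $\sigma$, for each $k \in \omega$ I would apply pure decision at the Laver factor at coordinate $\gamma$ (together with arbitrary extensions of the remaining tail) to find $r_k \leq p \restriction [\gamma, \alpha)$ whose $\gamma$-th component is $\leq_0 p(\gamma)[\dot{G}_\gamma]$ except that its stem has been pushed to the $k$-th immediate successor of $\sigma^\ast$, and which decides $\dot{x} \restriction (|\sigma|+k)$. Compactness of $2^\omega$ then yields an infinite $I \subseteq \omega$ along which the decided initial segments converge to some real $x_\sigma^\gamma \in V[\dot{G}_\gamma]$. Exactly as in the single-step proof, the amalgamation $r = \bigcup_{k \in I} r_k$ is a valid condition whose $\gamma$-th component is a Laver tree $\leq_0 p(\gamma)[\dot{G}_\gamma]$, and after relabeling $I$ as $\omega$ (the standard thinning built into the fusion argument), $r$ forces $r(\gamma) \ast (\sigma^\smallfrown k) \Vdash \dot{x} \restriction (|\sigma| + k) = x_\sigma^\gamma \restriction (|\sigma| + k)$ for every $k \in \omega$.

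To pull everything back to $V$, I would invoke the maximum principle to obtain $\mathbb{L}_\gamma$-names: $x_\sigma^\gamma$ becomes the canonical name for the limit real just constructed, and $p_\sigma^\gamma$ is defined by $p_\sigma^\gamma \restriction \gamma := p \restriction \gamma$ together with an $\mathbb{L}_\gamma$-name for the amalgamated condition $r$ on $[\gamma, \alpha)$. The main obstacle, as usual in fusion-style iteration arguments, is purely bookkeeping: one must check that the $\mathbb{L}_\gamma$-generically chosen amalgamation indeed yields a coherent $\mathbb{L}_\alpha$-condition extending $p$, with stem-preservation at coordinate $\gamma$ holding uniformly across the $\mathbb{L}_\gamma$-generic so that $p_\sigma^\gamma \restriction \gamma \Vdash p_\sigma^\gamma(\gamma) \leq_0 p(\gamma)$. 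Once that bookkeeping is handled, the argument is a direct transcription of the single-step Claim~\ref{guidingreal} performed inside the intermediate extension.
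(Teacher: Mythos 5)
Your proposal is correct and follows essentially the same route as the paper: a stem-preserving (pure decision) extension at coordinate $\gamma$ deciding $\dot{x} \restriction (|\sigma|+k)$ below $\sigma^\smallfrown k$, followed by compactness of $2^\omega$ to extract the $\mathbb{L}_\gamma$-name $x_\sigma^\gamma$, exactly as in Claim \ref{guidingreal}. The only difference is presentational --- you work semantically in $V[\dot{G}_\gamma]$ and pull back with the maximum principle, while the paper phrases the same step as a pure-decision property of the tail $\mathbb{L}_{[\gamma,\alpha)}$ of the iteration.
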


\begin{proof}
    Note that if $\varphi$ is a formula of the forcing language and $q \leq p$, then by the virtue of the pure decision property there exists $r \leq q$ such that
    \[r \restriction \gamma \Vdash r(\gamma) \leq_0 q(\gamma) \ \text{and $r \restriction [\gamma, \alpha)$ decides $\varphi$}.\]
    This way we get $p_\sigma^\gamma \leq p$ such that, for every $k \in \omega$: 
    \[p_\sigma^\gamma \restriction \gamma \Vdash p^\gamma_\sigma (\gamma) \ast (\sigma^\smallfrown k)^\smallfrown p_\sigma^\gamma \restriction (\gamma, \alpha) \text{ decides }\dot{x} \restriction (|\sigma| + k).\]

    The definition of the $\mathbb{L}_\gamma$ now follows from compactness of $2^\omega$, as in the proof of the single case (Claim \ref{guidingreal}).
\end{proof}

Moreover, one may construct a sequence $(p_\sigma^\gamma)_{\sigma \in \omega^{<\omega}}$ such that each $p_\sigma^\gamma$ is as above, and $p_{\sigma^\smallfrown n}^\gamma \leq p_\sigma^\gamma$ for all $n \in \omega$ and $\sigma \in \omega^{<\omega}$. From this it is possible to define an \textit{iterated rank at coordinate $\gamma$}: for $\sigma \in \omega^{<\omega}$,

\[r^\gamma (\sigma) = 0 \leftrightarrow \exists^\infty k \in \omega\ \left(p_{\sigma^\smallfrown k}^\gamma \restriction \gamma \Vdash x_\sigma^\gamma \neq x_{\sigma^\smallfrown k}^\gamma\right).\]

Positive ranks are defined similarly to the single case. 

\begin{comment}
\begin{claim} If $\dot{x}$ is not added before stage $\gamma < \alpha$, then for every $\sigma \in \omega^{<\omega}$, there exists a frontier $A_\sigma$ above $\sigma$ consisting of only $\gamma$-rank zero nodes.
\end{claim}

%Needs to check this

There exists a stronger $p_{A_\sigma}^\gamma$ such that $p_{A_\sigma}^\gamma \restriction \gamma$ decides every $x_{\tau}^\gamma$, for $\tau \in \mathrm{succ}(A_\sigma^\gamma)$ (i.e., successors of elements of $A_\sigma^\gamma$). Note that $p_{A_\sigma}^\gamma$ is now an $\mathbb{L}_\alpha$-condition for which $r^\gamma (\tau) = 0$, for all $\tau \in A_\sigma^\gamma$. 

Proceeding inductively, one may find a finite chain $A_0^\gamma, ..., A_{k-1}^\gamma$ of frontiers, and an $\mathbb{L}_\alpha$-condition $p_k^\gamma$ for which every $\tau \in A_i^\gamma$ has $\gamma$-rank zero, for all $i < k$.
\end{comment}

Let $F \subseteq \alpha$, $\eta: F \rightarrow \omega$, and $\sigma \in \prod_{\gamma \in F} \eta(\gamma)^{\eta(\gamma)}$. We define $p \ast \sigma$ such that
\[\forall \gamma \in F \left((p \ast \sigma) \restriction \gamma \Vdash (p \ast \sigma) (\gamma) = p(\gamma) \ast \sigma(\gamma) \right).\]

Say that $q \leq p$ is \textit{$G$-$(F, \eta)$-faithful} iff 
\[\ell\left([\dot{x}_{q \ast \sigma}], [\dot{x}_{q \ast \tau}]\right) \geq \ell_{\max} \doteq \max_{\gamma \in F} \{|\sigma(\gamma)| + |\tau(\gamma)| - 2|\Delta(\sigma(\gamma), \tau(\gamma))|\},\] 
for all distinct $\sigma, \tau \in  \prod_{\gamma \in F} \eta(\gamma)^{\eta(\gamma)}$, where $\dot{x}_{r}$ is the maximal initial segment decided by $r \leq q$.

Let $\eta'$ be defined such that  $\eta'(\gamma) = \eta(\gamma)$, for all $\gamma \notin \{\beta,\bar{\gamma}\}$;  $\eta' (\beta) = \eta(\beta) + 1$ if $\bar{\gamma} \neq \beta$; and $\eta'(\bar{\gamma}) = \eta_{\max} + \ell_{\max}+1$, where $\eta_{\max} = \underset{\gamma \in F}{\max} \ \eta(\gamma)$.\\  

\begin{lem}\label{faithfulness2} Let $G$ be an $F_\sigma$ graph on $2^\omega$, with closed cover $(C_n)_{n \in \omega}$, and $q \leq_{F, \eta} p$ be a $G$-$(F, \eta)$-faithful condition. Then there exists a $G$-$(F, \eta')$-faithful condition $r \leq_{F, \eta} q$,
\begin{itemize}
    \item[(a)] for countable support iterations of the Miller forcing; if $G$ is locally countable; and
    \item[(b)] for countable support iterations of Laver forcing, if $G$ is $\ell$-unbounded.
\end{itemize}
\end{lem}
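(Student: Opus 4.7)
The plan is to mimic the single-step construction of Lemma~\ref{laverlem} inside the iteration, now substituting the iterated guiding reals $x_\sigma^{\gamma}$ for the plain guiding reals. Let $F' = F \cup \{\bar{\gamma}\}$. Since $\prod_{\gamma \in F'} \eta'(\gamma)^{\eta'(\gamma)}$ is only finitely larger than $\prod_{\gamma \in F} \eta(\gamma)^{\eta(\gamma)}$, only finitely many new pairs $(\sigma, \tau)$ require attention. I would process these new pairs in a fixed order, at each stage refining the current condition only at coordinates $\beta$ and $\bar{\gamma}$, and keeping $\leq_0$-strengthenings at every other coordinate of $F$ by appealing to the iterated pure-decision procedure that produced the conditions $p_\sigma^{\gamma}$ in the previous claim. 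The faithfulness already established for the old pairs at $\eta$ is then automatically preserved.

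At each refinement step the central manoeuvre is the iterated analogue of items (a) and (b) of Lemma~\ref{laverlem}. When specifying the choice of a new immediate successor at coordinate $\bar{\gamma}$ (or the new level at $\beta$), the iterated rank-$0$ property furnishes infinitely many candidates $k \in \omega$ for which the iterated guiding real $x_{\sigma^{\smallfrown} k}^{\bar{\gamma}}$ is forced to differ from $x_\sigma^{\bar{\gamma}}$. Among those candidates, $\ell$-unboundedness of $G$ in case (b), or local countability of $G$ in case (a), produces $k$ for which the new iterated guiding real achieves any prescribed $\ell$-separation from a fixed finite list of reference reals---precisely the dichotomy exploited in Lemma~\ref{laverlem}. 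Closedness of the $C_n$'s then lifts such a separation to a separation between sufficiently long initial segments of the names, supplying a uniform bound $k_{n+1}$ that witnesses the required $\ell$-distance at the level of the cover.

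After finitely many such refinements the resulting condition $r \leq_{F,\eta} q$ has the property that, for any two distinct $\sigma, \tau \in \prod_{\gamma \in F'} \eta'(\gamma)^{\eta'(\gamma)}$, the conditions $r \ast \sigma$ and $r \ast \tau$ decide initial segments of $\dot{x}$ whose $\ell$-distance exceeds the new $\ell_{\max}$ determined by $\eta'$; this is exactly the $G$-$(F, \eta')$-faithfulness to be proved. The value $\eta_{\max} + \ell_{\max} + 1$ chosen for $\eta'(\bar{\gamma})$ leaves room to accommodate every new pair simultaneously within one coordinate's branching structure.

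The principal obstacle is that the iterated guiding reals $x_\sigma^{\gamma}$ are $\mathbb{L}_\gamma$-names rather than ground-model reals, so every separation argument has to be executed inside the iteration. At each step one must exhibit an iterated pure-decision extension that decides the relevant iterated guiding real up to a specified length and is itself $\leq_0$ on coordinates outside $\{\beta, \bar{\gamma}\}$, and one must be careful that the successive $\leq_0$-strengthenings accumulated during the finite recursion do not undo earlier decisions. This bookkeeping is the genuine analogue of the fusion step behind Lemma~\ref{laverlem}; once it is set up properly, the combinatorial content of the proof is identical to that of the single-step case.
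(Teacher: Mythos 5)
Your sketch has the right overall shape---concentrate the work at finitely many coordinates, redo the single-step construction of Lemma~\ref{laverlem} with the iterated guiding reals $x^{\gamma}_\sigma$ in place of ground-model reals, and invoke closedness of the $C_n$'s to pass from the reals to decided initial segments of $\dot{x}$---and your observation that the old pairs stay faithful under further extension is correct. But there is a genuine gap in how you handle the case $\beta \neq \bar{\gamma}$, i.e.\ precisely the case the lemma is designed for. The faithfulness inequality must hold in particular for pairs $\sigma', \tau' \in \prod_{\gamma \in F} \eta'(\gamma)^{\eta'(\gamma)}$ that agree at every coordinate except $\beta$, where they differ only at the single new level $\eta(\beta)+1$. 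You propose to secure these pairs by running the rank-zero/guiding-real argument ``at the new level at $\beta$'' as well as at $\bar{\gamma}$, but the iterated rank and the names $x^{\gamma}_\sigma$ are set up relative to one coordinate at a time; you never say how separations arranged at coordinate $\beta$ (which are statements forced by conditions in $\mathbb{L}_\beta$) survive the subsequent refinements you make at coordinate $\bar{\gamma}$, nor which coordinate's separation governs a pair that differs at both. This interaction is exactly where the difficulty of the iterated lemma lives, and your ``bookkeeping'' remark does not resolve it.

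The paper avoids this problem entirely by doing \emph{all} of the separation work at the single coordinate $\bar{\gamma}$, pushing the tree there down to the deep level $\eta_{\max} + \ell_{\max} + 1$ below each $q_j \ast \sigma_j$, and then \emph{transferring} the resulting separation to the new level at $\beta$ by a combinatorial re-indexing: one fixes a partition $\{I_\tau\ |\ \tau \in \eta'(\beta)^{\eta'(\beta)}\}$ of $\omega$ into infinitely many infinite pieces and prunes $r(\bar{\gamma})$ so that the set of successors available at the relevant splitting nodes of coordinate $\bar{\gamma}$ is $I_{\sigma'(\beta)}$, i.e.\ it depends on which node $\sigma'(\beta)$ was chosen at coordinate $\beta$. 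Two extensions differing only at the new level of $\beta$ are thereby forced into disjoint successor-sets at $\bar{\gamma}$, where the $\ell$-distance of at least $\ell_{\max}+1$ was already arranged. Some device of this kind (or a genuinely worked-out multi-coordinate fusion) is indispensable; without it your argument establishes faithfulness only for pairs that already differ at coordinate $\bar{\gamma}$.
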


\begin{proof}
    Let $\{\sigma_0, ..., \sigma_{m-1}\}$ be an enumeration of $\prod_{\gamma \in F\setminus\{\bar{\gamma}\}} \eta(\gamma)^{\eta (\gamma)}$. We define a $\leq_{F, \eta}$-decreasing sequence $(p_j)_{j < m}$, as follows: 

    Assume we have constructed $p_{j-1}$. Since $\dot{x}$ is not added at any proper stage of the iteration, there exists $q_j \leq_{F, \eta} p_{j-1}$ such that all $\tau \in \omega^{\leq \eta_{\max} + \ell_{\max} + 1}$ have $\bar{\gamma}$-rank zero for the condition $q_j \ast \sigma_j$. 

    Using ideas from the proof of Lemma \ref{laverlem}, we define an order-preserving injection $i$ on all the set of all $\tau$'s as above, and find a condition $p_j \leq_{F, \eta} q_j$ such that $(p_j \ast \sigma_j) \restriction \bar{\gamma}$ forces that

        \begin{itemize}
            \item[(1)] $\ell\left(\left[x_{i(\tau)}^{\bar{\gamma}} \restriction |i(\tau)| + k_{n}\right], \left[x_{i(\tau')}^{\bar{\gamma}} \restriction |i(\tau')|+ k_n\right]\right) \geq |\tau| - |\tau'|$, if $\tau' \subseteq \tau$; and
                
            \item[(2)] $\ell\left(\left[x_{i(\tau)}^{\bar{\gamma}}\restriction |i(\tau)|+ k_n \right], \left[x_{i(\tau')}^{\bar{\gamma}} \restriction |i(\tau')|+ k_n \right]\right) \geq |\tau|+|\tau'| - 2 |\Delta(\tau, \tau')|$, if $\tau$ and $\tau'$ are incompatible.
        \end{itemize}
        
    for all $\tau, \tau' \in \mathrm{dom} (i)$. In particular, \[\ell\left(\left[x_{i(\tau)}^{\bar{\gamma}} \restriction |i(\tau)| + k_{\bar{n}}\right], \left[x_{i(\tau')}^{\bar{\gamma}} \restriction |i(\tau')|+ k_{\bar{n}}\right]\right) \geq \ell_{\max} + 1\] 
    when $|\tau| = |\tau'| = \lceil\eta_{\max} + (\ell_{\max} +1)/2 \rceil$, and $|\Delta(\tau, \tau')| \leq \eta_{\max}$.
    
    If $\beta = \bar{\gamma}$, simply let $r = p_{m-1}$; if $\beta \neq \bar{\gamma}$, let $\left\{I_{\tau}\ |\ \tau \in \eta'(\beta)^{ \eta'(\beta)}\right\}$ denote a partition of $\omega$ into finitely many infinite pieces. Then $r \leq_{F, \eta} p_{m-1}$ is defined such that 

\begin{itemize}
    \item[(1)] $r \restriction \bar{\gamma} = p_{m-1} \restriction \bar{\gamma}$; 
    
    \item[(2)] for all coordinatewise extensions $\sigma' \in \prod_{\gamma \in F \setminus \{\bar{\gamma}\}} \eta'(\gamma)^{\eta'(\gamma)}$, of the restricted product of nodes $\sigma \in \prod_{\gamma \in F \setminus \{\bar{\gamma}\}} \eta(\gamma)^{\eta(\gamma)}$, for all $\bar{\sigma} \in \eta (\bar{\gamma})^{< \eta(\bar{\gamma})}$,
    \[(r \ast \sigma') \restriction \bar{\gamma} \Vdash \mathrm{succ}(\mathrm{st} (r(\bar{\gamma})\ast \bar{\sigma}) \setminus \{0, ..., \eta(\bar{\gamma}) - 1\}^* = I_{\sigma'(\beta)}^{\ast},\]
    where $\{0, ..., k-1\}^*$ denotes the first $k$ immediate successors of the stem of the restriction of $r(\bar{\gamma})$ to $\bar{\sigma}$, $r(\bar{\gamma})\ast \bar{\sigma}$; for all $\bar{\sigma} \in \eta (\bar{\gamma})^{\eta(\bar{\gamma})}$,
    \[(r \ast \sigma') \restriction \bar{\gamma} \Vdash \mathrm{succ}(\mathrm{st} (r(\bar{\gamma})\ast \bar{\sigma}) = I_{\sigma'(\beta)}^{\ast};\]

    \item[(3)] and $r \restriction (\bar{\gamma}+1) \Vdash r \restriction (\bar{\gamma}, \alpha) = p_{m-1} \restriction (\bar{\gamma}, \alpha)$ \qedhere.
\end{itemize}

\end{proof}

\begin{proof}[Proof of Theorem \ref{laverthm}]
    Using Lemma \ref{faithfulness2} and some bookkeeping, we may construct a sequence $(p_n, F_n, \eta_n)_{n \in \omega}$ such that
    \begin{itemize}
        \item[(1)] $F_n \subseteq \alpha$ is finite; 
        \item[(2)] $F_n \subseteq F_{n+1}$;
        \item[(3)] $\eta_n: F_n \rightarrow \omega$;
        \item[(4)] $\eta_n (\gamma) \leq \eta_{n+1} (\gamma)$, for all $\gamma \in F$;
        \item[(5)] $p_{n+1} \leq_{F_n, \eta_n} p_n$;
        \item[(6)] for all $\gamma \in \mathrm{supp}(p_n)$, there is $m \in \omega$ such that $\gamma \in F_m$ and $\eta_m (\gamma) \geq n$; and
        \item[(7)] $p_n$ is $(F_n, \eta_n)$-faithful.
    \end{itemize}
    
    Let $q \in \mathbb{L}_\alpha$ be defined recursively such that
    \[\forall \gamma < \alpha \left(q \restriction \gamma \Vdash q(\gamma) = \bigcap_{n \in \omega} p_n (\gamma)\right).\]

    Let $(x(\gamma))_{\gamma \in \mathrm{supp}(q)}$ be a sequence in $(\omega^{\omega})^{\mathrm{supp}(q)}$ and define a function $f$ by 
    \[f \left(\left(x(\gamma)_{\gamma \in \mathrm{supp}(q)}\right)\right) = \bigcup_{n \in \omega} \dot{x}_{q\ast(x(\gamma)\restriction\eta_{n}(\gamma))_{\gamma \in F_{n}}}.\] 
    
    This is a ground model continuous injection $f: (\omega^\omega)^{\mathrm{supp}(q)} \rightarrow 2^\omega$ 
    mapping the generic sequence to $\dot{x}$ --- i.e., $q \Vdash f(x_\mathrm{gen} (\gamma))_{\gamma \in \mathrm{supp}(q)} = \dot{x}$. Due to the above property, we have $\ell (f(x), f(y)) = \infty$, for all distinct $x, y \in (\omega^\omega)^{\mathrm{supp} (q)}$. Hence, $f\left[(\omega^\omega)^{\mathrm{supp}(q)}\right]$ is a ground model Borel $G$-independent set. \qedhere
\end{proof}

\section{An application to regularity properties}\label{regularities}

Regularity properties emerged as early as the discipline of descriptive set theory. The two main examples of this type of good behavior of sets of reals are the notions of Lebesgue and Baire measurabilities. 

From the close relationship between these measurability notions and the random and the Cohen forcings, various new notions of measurability emerged. Namely, if $\mathbb{P}$ is a forcing notion of perfect subtrees, either of $2^{<\omega}$ or $\omega^{<\omega}$, and $A$ is a subset of the respective space in which the forcing is defined, we say that $A$ is \textit{$\mathbb{P}$-measurable} iff
\[\forall p \in \mathbb{P}\ \exists q \leq p\ \left([q] \subseteq A \text{ or } [q] \cap A = \emptyset\right).\]
From this it is possible to isolate the notion of \textit{$\mathbb{P}$-null} sets:
\[p^0 = \{A\ |\ \forall p \in \mathbb{P}\ \exists q \leq p\ ([q] \cap A = \emptyset)\},\]
where $[p]$ denotes the set of branches through $p$ (i.e., the set of all $x \in \omega^{\omega}$ such that for all $n \in \omega$, $x \restriction n \in p$).

Using typical fusion arguments, it is often possible to prove that $p^0$ is a $\sigma$-ideal, which is the case for the three forcing notions considered here.

If $\mathbb{P}$ is the random forcing, $\mathbb{P}$-measurability is equivalent to the Lebesgue measurability, and if $\mathbb{P}$ is the Cohen forcing, then $\mathbb{P}$-measurability is equivalent to the Baire measurability. To answer the question of Fischer, Friedman and Khomskii we need the notions of $\mathbb{E}_0$ and Silver measurabilities:

A tree $p \subseteq 2^{<\omega}$ is an \textit{$E_0$-tree} iff it is perfect; and for every splitting node $s \in p$, there are $s_0 \supseteq s^\smallfrown 0$ and $s_1 \supseteq s^\smallfrown 1$, of the same length, such that
\[\left\{x \in 2^\omega\ |\ s_0^\smallfrown x \in [p]\right\} = \left\{x \in 2^\omega\ |\ s_1^\smallfrown x \in [p]\right\}.\]
If $s_0$ and $s_1$ can always be chosen to be $s^\smallfrown 0$, and $s^\smallfrown 1$, respectively, then $p$ is a \textit{Silver tree}. The \textit{$E_0$-forcing}, $\mathbb{E}_0$, consists of $E_0$-trees; and the \textit{Silver forcing}, $\mathbb{V}$, consists of Silver trees.

There are various implications between regularity properties \cite[§4 pg. 1350]{brendle2012polarized}, as well as consistent separations. Since Silver measurability clearly implies $E_0$-measurability, we show how to separate Laver and $E_0$-measurabilities with the help of Ikegami's theorem. For that, we need the definition of \emph{quasi-generic real}: say that a real number $x$ is \textit{$\mathbb{P}$-quasi-generic over $M$}, a model of ZF, iff $x \notin B$, for all Borel set $B \in p^0 \cap M$ (i.e., Borel null sets \textit{coded} in $M$).

It is always the case that generic reals are quasi-generic reals, but not all quasi-generic reals are generic reals. Furthermore, we can immediately see, from Theorem \ref{laverthm}, that Laver forcing does not add $E_0$-quasi-generic reals (hence, it also does not add Silver quasi-generic reals). 

\begin{fact}[Theorem 1.3 of \cite{ike1}]\label{ik} For $\mathbb{P}$ satisfying a stronger form of properness and additional definability requirements \cite[Definitions 2.3 and 2.4]{ike1}: every $\boldsymbol{\Delta}^{1}_{2}$ set of reals is $\mathbb{P}$-measurable if, and only if, there exists a $\mathbb{P}$-quasi-generic real over $L[x]$, for each $x \in 2^\omega$.
\end{fact}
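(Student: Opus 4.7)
The plan is to follow the Solovay--Judah--Shelah template for characterising second-level projective regularity, suitably adapted to an arbitrary forcing $\mathbb{P}$ of perfect trees whose $\sigma$-ideal $p^0$ satisfies Ikegami's definability axioms (Definitions 2.3--2.4 of the cited paper). Both implications hinge on placing the relevant predicates at the $\boldsymbol{\Delta}^1_2$ level via Shoenfield absoluteness and on the fact that $p^0$ is a $\sigma$-ideal.

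For the direction $(\Leftarrow)$, I would fix a $\boldsymbol{\Delta}^1_2(x)$ set $A$ and a condition $p \in \mathbb{P}$ and try to produce a homogeneous $q \leq p$. Writing $A$ and $A^{c}$ by their Shoenfield trees in $L[x]$, membership ``$y \in A$'' is absolute between $V$ and $L[x][y]$, so it is decided by any real $y$ together with $L[x]$. Combined with the strong properness and definability of $\mathbb{P}$, this gives a Solovay-style dichotomy below $p$: either $\{y \in [p] : y \in A\}$ is $p^{0}$-comeager in $[p]$ (its complement is covered by countably many Borel sets in $p^{0}$ coded in $L[x,p]$) or the reverse holds. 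A fusion argument peculiar to $\mathbb{P}$ then produces $q \leq p$ whose branches are all $\mathbb{P}$-quasi-generic over $L[x,p]$ and for which $[q] \subseteq A$ or $[q] \cap A = \emptyset$; the assumed existence of a $\mathbb{P}$-quasi-generic over $L[x,p]$ is what keeps this fusion from collapsing.

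For the direction $(\Rightarrow)$, fix $x \in 2^\omega$ and set
\[N_x = \bigcup\bigl\{B \subseteq 2^\omega : B \text{ is Borel}, \, B \in p^{0}, \text{ and } B \text{ has a code in } L[x]\bigr\}.\]
Only countably many Borel codes live in $L[x]$; Ikegami's definability hypothesis makes the predicate ``the Borel set coded by $c$ lies in $p^{0}$'' sufficiently absolute (essentially $\boldsymbol{\Pi}^1_1$ in $c$), so $N_x$ is $\boldsymbol{\Sigma}^1_2(x)$. Using Shoenfield absoluteness and the dual description ``$y \notin N_x$ iff $y$ avoids every Borel $p^{0}$-set coded in $L[x]$'', one rewrites $N_x$ also as $\boldsymbol{\Pi}^1_2(x)$, so $N_x$ is $\boldsymbol{\Delta}^1_2(x)$. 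Apply $\mathbb{P}$-measurability to $N_x$ and some $p \in \mathbb{P}$: there is $q \leq p$ with $[q] \subseteq N_x$ or $[q] \cap N_x = \emptyset$. The former is impossible, because $[q] \subseteq N_x$ covers $[q]$ by countably many Borel sets in $p^{0}$, so $[q] \in p^{0}$ by the $\sigma$-ideal property, contradicting $q \in \mathbb{P}$. Hence $[q] \cap N_x = \emptyset$, and every branch of $q$ in $V$ is a $\mathbb{P}$-quasi-generic real over $L[x]$.

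The main obstacle I anticipate is the bookkeeping of complexities: making Ikegami's axioms do exactly enough work to put both ``$B \in p^{0}$'' and the forcing-theoretic dichotomy used in $(\Leftarrow)$ on the second projective level, uniformly in $L[x,p]$-parameters, and to guarantee the Shoenfield-style absoluteness transfers between $V$ and those inner models. Once this framework is in place, the combinatorial core of each direction is short and uses only the $\sigma$-ideal property of $p^{0}$ together with the fusion technology available for $\mathbb{P}$.
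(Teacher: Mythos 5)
First, a point of comparison: the paper does not prove this statement at all --- it is imported verbatim as Theorem 1.3 of Ikegami's paper and used as a black box, so the only fair comparison is with Ikegami's own argument, which your sketch does not reproduce.

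Your $(\Rightarrow)$ direction has a concrete gap. The claim that ``only countably many Borel codes live in $L[x]$'' is false in general: nothing in the hypotheses rules out $\omega_1^{L[x]} = \omega_1$ (this happens, for instance, in models obtained by $\omega_1$-length iterations over $L$, which is exactly the situation the present paper cares about). Consequently $N_x$ is a union of possibly $\aleph_1$-many sets from $p^0$, the $\sigma$-ideal property does not let you conclude $[q] \in p^0$ from $[q] \subseteq N_x$, and the contradiction you want does not materialize. Relatedly, $N_x$ is only $\boldsymbol{\Sigma}^1_2(x)$: the complement ``$y$ avoids every Borel $p^0$-set coded in $L[x]$'' is a universal quantifier over a $\boldsymbol{\Sigma}^1_2$ predicate (``$c \in L[x]$'') and is genuinely $\boldsymbol{\Pi}^1_2$, not $\boldsymbol{\Sigma}^1_2$; so you cannot apply $\boldsymbol{\Delta}^1_2$-measurability to $N_x$. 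The set $N_x$ is the right tool for characterizing $\boldsymbol{\Sigma}^1_2(\mathbb{P})$ (where the conclusion is the stronger ``$p^0$-almost every real is quasi-generic''); the $\boldsymbol{\Delta}^1_2$ forward direction is proved differently, by contraposition: assuming no quasi-generic over $L[x]$ exists, one uses the $\boldsymbol{\Sigma}^1_2(x)$ good wellordering of $2^\omega \cap L[x]$ and the resulting total map $y \mapsto$ (least code of a $p^0$-set containing $y$) to build a $\boldsymbol{\Delta}^1_2(x)$ set that is non-$\mathbb{P}$-measurable by a Bernstein-style diagonalization against conditions.

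The $(\Leftarrow)$ direction is also asserted rather than proved at its crucial step. The ``Solovay-style dichotomy'' --- that below any $p$ either $A$ or its complement is covered by countably many $p^0$-sets coded in $L[x,p]$ --- is essentially the conclusion you are trying to reach, and no reason is given why it holds. Moreover, the passage from the existence of a \emph{single} quasi-generic over $L[x,p]$ to a condition $q \leq p$ all of whose branches are quasi-generic (so that the decision made by $L[x,p]$'s version of $\mathbb{P}$ about ``$\dot{x}_{\mathrm{gen}} \in A$'' propagates to all of $[q]$) is precisely the technical heart of Ikegami's theorem; it requires his properness and definability axioms together with Shoenfield-type transfer arguments, and cannot be dismissed as ``a fusion argument peculiar to $\mathbb{P}$.'' As written, the proposal identifies the right landmarks of the Judah--Shelah template but does not supply the two steps that make the $\boldsymbol{\Delta}^1_2$ level (as opposed to the $\boldsymbol{\Sigma}^1_2$ level) work.
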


From this, together with the the fact that $\boldsymbol{\Delta}^1_2 (\mathbb{L})$ implies $\boldsymbol{\Sigma}^1_2 (\mathbb{L})$, we are able to answer Question 6.3. of \cite{Fischer2014CichosDR}:

\begin{cor}\label{silvervslaver} $\boldsymbol{\Sigma}^{1}_{2}(\mathbb{L}) \land \neg \boldsymbol{\Delta}^{1}_{2}(\mathbb{E}_0)$ holds in the model obtained by forcing with an $\omega_1$-iteration of $\mathbb{L}$, with countable support, over $L$.
\end{cor}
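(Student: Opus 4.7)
The plan is to apply Ikegami's criterion (Fact \ref{ik}) in both directions inside the iterated Laver extension $L[G]$. For $\boldsymbol{\Sigma}^{1}_{2}(\mathbb{L})$ I will first establish $\boldsymbol{\Delta}^{1}_{2}(\mathbb{L})$ and then invoke the implication $\boldsymbol{\Delta}^{1}_{2}(\mathbb{L}) \Rightarrow \boldsymbol{\Sigma}^{1}_{2}(\mathbb{L})$ noted in the text. For $\neg \boldsymbol{\Delta}^{1}_{2}(\mathbb{E}_0)$ I will block the existence of any $\mathbb{E}_0$-quasi-generic real over $L$.

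For the first part, fix any $x \in 2^\omega \cap L[G]$. Properness of the countable support $\omega_1$-iteration of $\mathbb{L}$ produces some $\alpha < \omega_1$ with $x \in L[G_\alpha]$; then the Laver real added at any stage $\beta \in [\alpha, \omega_1)$ is $\mathbb{L}$-generic, and in particular $\mathbb{L}$-quasi-generic, over $L[x]$. Fact \ref{ik} then yields $\boldsymbol{\Delta}^{1}_{2}(\mathbb{L})$ in $L[G]$, which in turn upgrades to $\boldsymbol{\Sigma}^{1}_{2}(\mathbb{L})$.

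For the second part, I would first record that $E_0$ is an $F_\sigma$ $\ell$-unbounded graph on the compact totally disconnected Polish space $2^\omega$, with respect to the natural cover $C_n = \{(x,y) : x \neq y,\ x \restriction [n,\omega) = y \restriction [n,\omega)\}$. Indeed, if $(x, y) \in E_0$ and $\ell(x, y) = k$, then any $z$ with $z \restriction k = x \restriction k$ and $z \neq x$ must differ from $x$, and therefore from $y$, at some coordinate $\geq k$, forcing $\ell(z, y) > k$. Theorem \ref{laverthm}(b), whose proof applies to the $\omega_1$-iteration verbatim, then gives that every $y \in 2^\omega \cap L[G]$ lies in some Borel $E_0$-independent set $A_y$ coded in $L$. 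By the remark preceding Fact \ref{ik}, any Borel $E_0$-independent set is $\mathbb{E}_0$-null, so $A_y \in L \cap \mathbb{E}_0^0$ is a Borel $\mathbb{E}_0$-null set containing $y$, forbidding $y$ from being $\mathbb{E}_0$-quasi-generic over $L$. Applying the contrapositive of Fact \ref{ik} at $x = 0$ now yields $\neg \boldsymbol{\Delta}^{1}_{2}(\mathbb{E}_0)$.

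The main obstacle will be the verification that every Borel $E_0$-independent set indeed belongs to $\mathbb{E}_0^0$; this is the non-trivial content behind the parenthetical remark in the text and requires a standard fusion on $E_0$-trees showing that, for any Borel $E_0$-independent $A$ and any $E_0$-tree $p$, there is an $E_0$-subtree $q \leq p$ with $[q] \cap A = \emptyset$, exploiting that an $E_0$-tree encodes uncountably many elements of each $E_0$-class it hits in its branch set while $A$ meets each class in at most one point. Everything else is a direct assembly of Theorem \ref{laverthm}, Fact \ref{ik}, and the standard properness of countable support iterations of $\mathbb{L}$.
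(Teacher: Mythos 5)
Your proposal is correct and follows essentially the same route as the paper: Ikegami's characterization applied in both directions, with Laver (quasi-)generics over each $L[x]$ giving $\boldsymbol{\Delta}^1_2(\mathbb{L})$ (hence $\boldsymbol{\Sigma}^1_2(\mathbb{L})$), and Theorem \ref{laverthm}(b) applied to $E_0$ blocking $\mathbb{E}_0$-quasi-generics over $L$. You also correctly identify and fill in the two points the paper leaves implicit, namely that $E_0$ is $\ell$-unbounded for the natural closed cover and that Borel $E_0$-independent sets lie in $\mathbb{E}_0^0$.
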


It is well-known that Silver forcing adds splitting \cite[Proposition 2.4]{brendle_halbeisen_lwe_2005}. Brendle, Halbeisen and Löwe asked \cite[Question 2]{brendle_halbeisen_lwe_2005} whether the existence of splitting reals over $L[x]$, for every real $x$, implies $\boldsymbol{\Delta}^{1}_{2}(\mathbb{V})$. 

\begin{cor}\label{splitting} In the model from Corollary \ref{silvervslaver} $\boldsymbol{\Delta}^1_2 (\mathbb{V})$ does not hold, even though there are splitting reals over each $L[x]$.
\end{cor}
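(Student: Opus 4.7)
The proof decomposes into two independent pieces: the failure of $\boldsymbol{\Delta}^1_2(\mathbb V)$ in the model, and the production of splitting reals over $L[x]$ for every real $x$.

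For the first, the plan is to leverage Corollary~\ref{silvervslaver} via the implication \emph{$\mathbb V$-measurability $\Rightarrow$ $\mathbb E_0$-measurability}, which the authors already flag as clear. The justification runs as follows: every Silver tree is tautologically an $E_0$-tree, and, conversely, any $E_0$-tree $p$ admits a Silver subtree $q \subseteq p$. Indeed, at each splitting node $s$ of $p$ the defining data $s_0 \supseteq s^\smallfrown 0$ and $s_1 \supseteq s^\smallfrown 1$ yield translates of a common tail; by iteratively refining $p$ one synchronizes these translates so that, in the refined tree, each splitting node has immediate successors $s^\smallfrown 0$ and $s^\smallfrown 1$ whose hanging subtrees coincide, which is precisely the Silver condition. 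Hence, given $\mathbb V$-measurability and any $E_0$-tree $p$, prune to a Silver subtree, homogenize inside it, and recover an $E_0$-subtree of $p$ witnessing $\mathbb E_0$-measurability. Contrapositively, $\neg\boldsymbol{\Delta}^1_2(\mathbb E_0)$ from Corollary~\ref{silvervslaver} gives $\neg\boldsymbol{\Delta}^1_2(\mathbb V)$.

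For the splitting part, I would first invoke properness of the countable-support $\omega_1$-iteration of $\mathbb L$: every real in the final extension has a nice name supported on countably many coordinates, so each real $x$ lies in $L[G_\alpha]$ for some $\alpha < \omega_1$. It therefore suffices to show that a single step of Laver forcing adds a splitting real over the ground model. Recall that $\mathbb L$ adds a dominating real $\dot f$; from such a real one extracts a splitting real by the standard recipe: set $a_0 = 0$ and $a_{n+1} = \dot f(a_n) + 1$, and let $\dot s = \bigcup_n [a_{2n}, a_{2n+1})$. For any infinite ground-model $A$, the ground-model function $g(k) = \min(A \setminus [0,k])$ is eventually dominated by $\dot f$, so $g(a_n) < a_{n+1}$ for all large $n$, which means that each block $[a_n, a_{n+1})$ meets $A$; this places infinitely many elements of $A$ in $\dot s$ and infinitely many in its complement. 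Consequently, the $(\alpha{+}1)$-st Laver step produces a splitting real over $L[G_\alpha] \supseteq L[x]$.

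The main technical nuisance is the $E_0$-tree-to-Silver-subtree pruning used in the first paragraph; it is folklore but the only nontrivial bookkeeping in the argument. The rest is a direct application of well-known properties of $\mathbb L$ (properness and the addition of a dominating real) together with the standard structure of countable-support iterations of length $\omega_1$.
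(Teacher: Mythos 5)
Your decomposition is the intended one: the paper treats this corollary as immediate from the surrounding discussion, namely that $\boldsymbol{\Delta}^1_2(\mathbb{V})$ implies $\boldsymbol{\Delta}^1_2(\mathbb{E}_0)$ (so Corollary \ref{silvervslaver} kills $\boldsymbol{\Delta}^1_2(\mathbb{V})$), while the Laver iteration supplies splitting reals over each $L[x]$. Your second half is correct and is exactly what the paper leaves implicit: properness of the countable support iteration puts every real $x$ into some intermediate model $L[G_\alpha]$ with $\alpha<\omega_1$, the next Laver real dominates $L[G_\alpha]\cap\omega^\omega$, and the block construction $a_{n+1}=\dot f(a_n)+1$, $\dot s=\bigcup_n[a_{2n},a_{2n+1})$ turns a dominating real into a splitting real over $L[G_\alpha]\supseteq L[x]$.

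The first half, however, rests on a false lemma. It is \emph{not} true that every $E_0$-tree admits a Silver subtree, and the ``synchronization by pruning'' you sketch cannot be carried out. Concretely, let $p$ be the $E_0$-tree whose branches are exactly the $x\in 2^\omega$ with $x(2n)=x(2n+1)$ for all $n$ (blocks $\{00,11\}$). If $[q]\subseteq[p]$ were a Silver set with infinite free set $F$ and fixed values off $F$, then some $2n\in F$ forces $2n+1\in F$ as well (since $x(2n+1)=x(2n)$ varies over $[q]$), but then $[q]$ contains a branch with $x(2n)=0$, $x(2n+1)=1$, which is not in $[p]$. The same example shows the hanging subtrees above $s^\smallfrown 0$ and $s^\smallfrown 1$ can never be made to coincide: the witnesses $s_0,s_1$ necessarily disagree at coordinate $|s|+1$, which lies inside the hanging subtrees, and pruning cannot alter that. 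The implication $\boldsymbol{\Delta}^1_2(\mathbb{V})\Rightarrow\boldsymbol{\Delta}^1_2(\mathbb{E}_0)$ is true, but the standard route is different: pass to a uniform $E_0$-subtree of $p$ with blocks $(a_i,b_i)$, use the canonical homeomorphism $\Phi:2^\omega\to[p]$ substituting $a_i$ or $b_i$ for the $i$-th bit, note that $\Phi$ maps Silver trees to $E_0$-subtrees of $p$ (the free coordinates become free block choices), and apply Silver measurability to $\Phi^{-1}[A]$, which is again $\boldsymbol{\Delta}^1_2$ because the pointclass is closed under such continuous preimages. This is a pointclass argument, not a pointwise one, so you should phrase the reduction at the level of $\boldsymbol{\Delta}^1_2(\mathbb{V})\Rightarrow\boldsymbol{\Delta}^1_2(\mathbb{E}_0)$ rather than ``$A$ Silver measurable $\Rightarrow$ $A$ $E_0$-measurable'' for a fixed $A$.
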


\section{Questions}\label{questions}

The first question concerns the topology of the set of vertices: we could not find a counterexample for Theorem \ref{laverthm} when the set of vertices is not compact, or not extremely disconnected. 

\begin{quest} Does Theorem \ref{laverthm} still hold if $X$ is not compact (e.g., $X = \omega^\omega$)? What if $X$ is not extremely disconnected (e.g., $X = [0, 1]$, or $X = \mathbb{R}$)?
\end{quest}

One could also ask the role of local countability. Sometimes, it suffices that the graphs lack perfect cliques, for its Borel chromatic number to be bounded by the ground model continuum, as in the case of the Sacks model for $F_\sigma$ graphs, and of the $E_0$-model, for closed graphs. 

\begin{quest} Does Theorem \ref{laverthm} still hold if we only know that $G$ lacks perfect cliques?
\end{quest}

Finally, we do know what happens for graphs of different complexities, such as $G_\delta$, $G_{\delta \sigma}$, $F_{\sigma \delta}$ etc. 

\begin{quest} Does Theorem \ref{laverthm} still hold if $G$ is an analytic graph?
\end{quest}

We think answering positively all three questions above would likely give us a simoutanous result --- i.e., Theorem \ref{laverthm} would hold for all $X$ Polish and $G$ analytic. Nevertheless, we conjecture that the compactness of $X$ is indispensable. 

On the regularity side, we have the surprising fact that $\boldsymbol{\Sigma}^1_2 (\mathbb{V})$ implies $\boldsymbol{\Sigma}^1_2 (\mathbb{M})$, where $\mathbb{M} \supsetneq \mathbb{L}$ denotes the Miller forcing \cite[Proposition 3.7]{brendle_halbeisen_lwe_2005}. It seems to be unknown whether $\boldsymbol{\Sigma}^1_2 (\mathbb{E}_0)$ already implies $\boldsymbol{\Sigma}^1_2 (\mathbb{M})$.

\begin{quest} Does $\boldsymbol{\Sigma}^1_2 (\mathbb{E}_0)$ imply $\boldsymbol{\Sigma}^1_2 (\mathbb{M})$? 
\end{quest}

On the other hand, if one wishes to prove that $\boldsymbol{\Sigma}^1_2 (\mathbb{E}_0)\land \neg \boldsymbol{\Delta}^1_2 (\mathbb{M})$ is consistent, then the common idea is to produce some \textit{amoeba} for the forcing $\mathbb{E}_0$, that will add to each $L[x]$ an $E_0$-tree of $E_0$-reals, which is $\omega^\omega$-bounding. While this may be optmitistic, $\boldsymbol{\Sigma}^1_2 (\mathbb{V})\land \neg \boldsymbol{\Delta}^1_2 (\mathbb{L})$ is true in the model obtained by adding Cohen reals over each $L[x]$. On the other hand, from the work of Spinas we know that any reasonable amoeba for Silver adds Cohen reals \cite[Theorem 5]{spinas2016silver}.

\begin{quest} Is there an amoeba for $\mathbb{E}_0$ which does not add Cohen reals?
\end{quest}

\bibliographystyle{plain}
\bibliography{bibliography}

\begin{thebibliography}{10}

\bibitem{bartoszynski1995set}
Tomek Bartoszynski and Haim Judah.
\newblock {\em Set Theory: on the structure of the real line}.
\newblock CRC Press, 1995.

\bibitem{brendle2012polarized}
J{\"o}rg Brendle and Yurii Khomskii.
\newblock Polarized partitions on the second level of the projective hierarchy.
\newblock {\em Annals of Pure and Applied Logic}, 163(9):1345--1357, 2012.

\bibitem{brendle_halbeisen_lwe_2005}
Jörg Brendle, Lorenz Halbeisen, and Benedikt Löwe.
\newblock Silver measurability and its relation to other regularity properties.
\newblock {\em Mathematical Proceedings of the Cambridge Philosophical
  Society}, 138(1):135–149, 2005.

\bibitem{BKW}
Jörg Brendle, Yurii Khomskii, and Wolfgang Wohofsky.
\newblock Cofinalities of marczewski-like ideals.
\newblock {\em Colloquium Mathematicum}, 150, 11 2016.

\bibitem{eventbl}
Jörg Brendle and Benedikt Löwe.
\newblock {Eventually different functions and inaccessible cardinals}.
\newblock {\em Journal of the Mathematical Society of Japan}, 63(1):137 -- 151,
  2011.

\bibitem{Fischer2014CichosDR}
Vera Fischer, Sy-David Friedman, and Yurii Khomskii.
\newblock Cichoń’s diagram, regularity properties and
  {$\boldsymbol{\Delta}^1_3$} sets of reals.
\newblock {\em Archive for Mathematical Logic}, 53:695--729, 2014.

\bibitem{gasgesch2022}
Michel Gaspar and Stefan Geschke.
\newblock Borel chromatic numbers of closed graphs and forcing with uniform
  trees.
\newblock {\em arXiv preprint arXiv:2208.06914v1}, 2022.

\bibitem{geschke2011}
Stefan Geschke.
\newblock Weak {Borel} chromatic numbers.
\newblock {\em Mathematical Logic Quarterly}, 57(1):5--13, 2011.

\bibitem{harrington1990glimm}
Leo~A Harrington, Alexander~S Kechris, and Alain Louveau.
\newblock A {Glimm}-{Effros} dichotomy for borel equivalence relations.
\newblock {\em Journal of the American mathematical society}, pages 903--928,
  1990.

\bibitem{ike1}
Daisuke Ikegami.
\newblock Forcing absoluteness and regularity properties.
\newblock {\em Annals of Pure and Applied Logic}, 161:879--894, 01 2006.

\bibitem{Ikegami2010GamesIS}
Daisuke Ikegami.
\newblock {\em Games in set theory and logic}.
\newblock PhD thesis, Universiteit van Amsterdam, 2010.

\bibitem{kechris1999borel}
Alexander~S Kechris, Slawomir Solecki, and Stevo Todorcevic.
\newblock Borel chromatic numbers.
\newblock {\em Advances in Mathematics}, 141(1):1--44, 1999.

\bibitem{spinas2016silver}
Otmar Spinas.
\newblock Silver trees and {Cohen} reals.
\newblock {\em Israel Journal of Mathematics}, 211(1):473--480, 2016.

\bibitem{zapletal2008forcing}
Jind{\v{r}}ich Zapletal.
\newblock {\em Forcing idealized}, volume 174.
\newblock Cambridge University Press Cambridge, 2008.

\bibitem{zapletal2019hypergraphs}
Jind{\v{r}}ich Zapletal.
\newblock Hypergraphs and proper forcing.
\newblock {\em Journal of Mathematical Logic}, 19(02):1950007, 2019.

\end{thebibliography}

\end{document}